\documentclass[11pt]{amsart}
\usepackage{fullpage}

\usepackage{hyperref}

\usepackage{amsmath,amsfonts,amssymb}
\usepackage{mathrsfs}
\usepackage{verbatim}
\usepackage{amsthm}
\usepackage{mathrsfs} 
\usepackage{color}

\newtheorem{theorem}{Theorem}[section]
 \newtheorem{corollary}[theorem]{Corollary}
 \newtheorem{lemma}[theorem]{Lemma}
 \newtheorem{proposition}[theorem]{Proposition}
 \theoremstyle{definition}
 \newtheorem{definition}[theorem]{Definition}
 \newtheorem{setting}[theorem]{Setting}
 \theoremstyle{remark}
 \newtheorem{remark}[theorem]{Remark}
  
 \numberwithin{equation}{section}

\def \bC {\mathbb C}

\def \bN {\mathbb N}

\def \bR {\mathbb R}
\def \bS {\mathbb S}
\def \bT {\mathbb T}

\def \bZ {\mathbb Z}

\def \cD {\mathcal D}
\def \cE {\mathcal E}
\def \cF {\mathcal F}

\def \cH {\mathcal H}

\def \cL {\mathcal L}
\def \cM {\mathcal M}

\def \cO {\mathcal O}

\def \cS {\mathcal S}

\def \sL{\mathscr L}
\def \tr {\text{\rm tr}}
\def \TR {\text{\rm TR}}
\def \id {\text{\rm I}}
\def\Op{{{\rm Op}}}
\def\supp{{{\rm supp}}}
\def\res{{\text{\rm res}}}
\def \sig {\varsigma}
\def \Dom {\text{\rm Dom}}

\begin{document}

\title
{Real trace expansions}
\author
{V\'eronique Fischer}
\address
{Department of Mathematical Sciences\\
University of Bath\\
Claverton Down\\ 
Bath  BA2 7AY, United Kingdom}
\email{v.c.m.fischer@bath.ac.uk}

\date{October 2019}

\keywords{Pseudodifferential operators on manifolds, non-commutative residues, canonical trace}

\subjclass[2010]{58J40, 58J42}

\begin{abstract}
In this paper, we investigate trace expansions of operators of the form $A\eta(t\cL)$ 
where $\eta:\bR\to \bC$ is a Schwartz function, 
$A$ and $\cL$ are classical pseudo-differential operators on a compact manifold $M$ with  $\cL$  elliptic.
In particular, we show that, under certain hypotheses, this trace admits an expansion in powers of $t\to 0^+$.
We also relate the constant coefficient to the non-commutative residue and the canonical trace of $A$. 
Our  main tool is the continuous inclusion of the functional calculus of $\cL$ into the pseudo-differential calculus whose proof  relies on the Helffer-Sj\"ostrand formula.
\end{abstract}

\maketitle

\makeatletter
\renewcommand\l@subsection{\@tocline{2}{0pt}{3pc}{5pc}{}}
\makeatother

\tableofcontents

\section{Introduction}

Trace expansions of operators are fundamental objects in geometric analysis, 
especially in index theory, spectral geometry and related mathematical physics, see e.g.  \cite[Ch. 5]{gilkey}
  and \cite{lesch_2010}.
In this paper, we investigate trace expansions of operators of the form $A\eta(t\cL)$ 
where $\eta:\bR\to \bC$ is a Schwartz function, 
$A$ and $\cL$ are classical pseudo-differential operators on a compact manifold $M$ and  $\cL$ is elliptic.
We will show that the trace admits an expansion in all the powers of $t\to 0^+$ when $\eta$ is supported away from 0, but only in the positive powers when $\eta$ is supported near 0, see Theorem \ref{thm_intro} below.

Another result of this paper is the identification of the constant coefficient in the expansion:
it is  equal to the non-commutative residue $\res(A)$ (up to a known constant of $\eta$) 
when $\eta$ is supported away from 0,
and to the canonical trace $\TR(A)$
when $\eta(0)=1$.
The definitions of the non-commutative residue and of the canonical trace will be recalled in Sections \ref{subsec_def_res} and \ref{subsubsec_def_TR} together with further references, so in this introduction  
we restrict our comments to their origins and uses.
The non-commutative residue was introduced independently by Guillemin \cite{guillemin} 
and Wodzicki \cite{Wod_84,Wod_87} in the early eighties.
Beside being the only trace on the algebra of pseudo-differential operators on $M$ up to constants, 
its importance comes from its applications in mathematical physics, 
mainly in Connes' non-commutative geometry due to its link with the Dixmier trace \cite{connes}
but also in relation with e.g. the Einstein-Hilbert action
(see \cite[Section 6.1]{lesch_2010} and the references therein).
The canonical trace was constructed by Kontsevich and Vishik in the mid-nineties \cite{KV} as a tool to study further zeta functions and determinants of elliptic pseudo-differential operators. Since then, it 
has received considerable attention and found interesting applications, 
see e.g. \cite{scott,paycha,okikiolu}.

Beside the expansion in itself and the identification of its constant coefficient, 
this paper proposes a new strategy to tackle these questions.
Indeed, the first trace expansions may be found in works by  
H\"ormander \cite{hormander2} and Duistermaat and Guillemin \cite{duistermaat+guillemin}, 
using Fourier integral operators, 
and  also by Seeley \cite{seeley_67, seeley_69, seeley_69_2},
using pseudo-differential calculi depending on a complex parameter (for the latter, see also \cite{grubb,Grubb+Schrohe_2001,Grubb+Seeley,lesch,schrohe,shubin_bk}).
The most studied examples of functions $\eta(\lambda)$ are
$e^{-z\lambda}$ and $\lambda^z$ where $z$ is a complex parameter
leading to 
the heat trace and the zeta function respectively.
The proofs using Seeley's ideas, in particular the relations between all these well-known expansions,  rely on the meromorphy in the complex parameter, for instance on contour integration which allows this complex parameter to lie far away from the spectrum of the elliptic operator.

Proving expansions of $\tr (A \eta(t\cL))$  using the ideas and results of \cite{hormander2} and \cite{duistermaat+guillemin} requires  e.g. that the Fourier transform of $\eta$ is compactly supported near 0, 
see \cite{Li+Stroh}.
Our proof will not use  pseudo-differential calculi with a complex parameter away from the spectrum of $\cL$ or Fourier integral operators or previous results in these directions.
Indeed,
our main tool is the continuous inclusion of the functional calculus of formally self-adjoint elliptic operators $\cL$ into the pseudo-differential calculus. 
We show this inclusion  in Section \ref{sec_FC}
using  the Helffer-Sj\"ostrand formula.
Our method requires the elliptic operator $\cL$ to be formally self-adjoint 
whereas Seeley's allows for a complex elliptic operator
whose principal symbol is non-negative (or more generally does not take values in a half-line of the complex plane).
 
 \medskip
 
The main result of this paper is summarised in the following statement:
\begin{theorem}
\label{thm_intro}
Let $\cL \in \Psi_{cl}^{m_0}(M)$ be a pseudo-differential operator on  a compact manifold $M$ which is elliptic and formally self-adjoint in the sense of Setting \ref{set_cL_M}.
Let $A\in \Psi^m_{cl}(M)$ and let $\eta:\bR\to \bC$ be a Schwartz function.
Then the operator $A\ \eta(t \cL)$ is traceclass for all $t\in \bR$.
\begin{enumerate}
\item 
If $\eta$ is supported away from 0, then the trace  of $A\ \eta(t \cL)$ admits the following expansion as $t\to 0^+$, 
$$
\tr \left(A \eta(t\cL)\right)
\sim 
c_{m+n}
t^{-\frac{m+n}{m_0}} 
+
c_{m-n-1}
t^{-\frac{m+n-1}{m_0}} 
+
\ldots 
$$	
If in addition  $m \in \bZ_n=\{-n,-n+1,-n+2,\ldots\}$, then the $t^0$-coefficient $c_0$ 
is  the non-commutative residue up to a known constant of $\cL$ and $\eta$.
\item 	
If  $m_0 \in \bN$
and if $m\in \bC \setminus\bZ_n$ with $\Re m>-n$,
then we have as $t\to 0$
 $$
 \tr (A\eta (t\cL)) = \eta(0)\, \TR(A)+ \sum_{j=0}^{N-1} c_{m+n-j} t^{\frac{-m-n+j}{m_0}}  +o(1),
 $$
where  $N\in \bN$ is  such that $\Re m+n<N$.
\end{enumerate}
\end{theorem}

A  version of Part (1) with additional information on the constants $c_{m+n-j}$ is given in Theorem~\ref{thm_main}.
They are the same constants appearing in Part (2) for which a more detailed statement is given in Theorem \ref{thm_TR}.
These constants are local in the sense that they depend only on the poly-homogeneous expansions of the symbol of $A$ in local charts. 
The canonical trace $\TR$ is not a local constant, and we think that Part(2) above  sheds light on the interpretation of $\TR$ as a 
 generalisation of  the classical Hadamard partie finie regularisation of integrals (see Section \ref{subsubsec_statement_TR}).
 In  \cite{fischer_locglob}, 
  the results of this paper are given in a more explicit way when $M$ is a compact Lie group and $\cL$ the Laplace-Beltrami operator:  in this setting, a notion of full symbols of pseudo-differential operators can be defined globally, and 
 the interpretation of $\TR$ as `Hadamard partie finie' is even more apparent.

In this paper, we consider the self-adjoint extension of the  elliptic operator $\cL$ on a compact manifolds $M$ without boundary or with Dirichlet boundary condition on an open set $\Omega$ of $\bR^n$. 
It will be interesting to extend our result and methods to the case of compact manifolds $M$ with boundary
 as in e.g. \cite{Grubb+Schrohe_2001,schrohe}.
We also leave the applications  of these expansions in geometry and index theory  to future works.
For the sake of clarity and brevity of the paper, 
we consider 
only pseudo-differential symbols with scalar values,
although  the generalisation to pseudo-differential operators acting on sections of a vector bundle over $M$ is straightforward for our results and proofs.

\smallskip

The paper is divided as follows.
The next section is devoted to present our main  results in their full generality;
this includes defining the settings of our investigations,
and recalling 
the definitions of our main objects, especially the non-commutative residue and the canonical trace.
The proofs of our result rely on the continuous inclusions of the functional calculus of elliptic operators 
 into the pseudo-differential calculus (presented in Section \ref{sec_FC})
 and  are given in Section \ref{sec_pf_thms}.

\smallskip

\textbf{Acknowledgements.}
The author thanks Alexander Strohmaier for bringing important literature to her attention and Jean-Marc Bouclet for his excellent online lecture notes \cite{bouclet}, as well as the anonymous referees for invaluable comments.

\section{Settings and statements}
\label{sec_res}

In this section, we discuss trace expansions in relations with the non-commutative residue.
After setting the notation for the pseudo-differential calculus and the H\"ormander classes 
(Section \ref{subsec_PsiOmega}), 
we recall the definition of the non-commutative residue via local symbols (Section \ref{subsec_def_res}), 
present its relations with well-known tracial expansions
(Section \ref{subsec_heatexp})
and with our new expansion 
(Sections \ref{subsec_statement_res} and \ref{subsec_cor}).
We end this section with the case of the canonical trace (Section \ref{subsec_TR}).

\subsection{The H\"ormander classes}
\label{subsec_PsiOmega}

Here we recall some well-known facts and set some notations for the H\"ormander pseudo-differential calculus.
Classical references for this material include \cite{Alinhac+Gerard,lerner_notes,shubin_bk,taylor_bk_princeton}.
We will not recall or make use of the links of non-commutative residue with zeta functions 
\cite{Wod_84,Wod_87,kassel} or with the Dixmier trace
\cite{connes}.

\subsubsection{The pseudo-differential calculus on $\bR^n$}
\label{subsec_defPDO}

We denote by $S^m(\bR^n\times \bR^n)$ the H\"ormander class of symbols of order $m\in \bR$ on $\bR^n$, 
that is, the Fr\'echet space of smooth functions $a:\bR^n\times \bR^n\to \bC$ satisfying 
$$
\|a\|_{S^m,N}:=
\sup_{\substack{
\alpha,\beta\in\bN_0^n\\ |\alpha|,|\beta|\leq N}}
\sup_{(x,\xi) \in \bR^n\times\bR^n}
\langle \xi \rangle^{|\alpha|-m}|\partial_x^\beta \partial_\xi^\alpha a(x,\xi)|<\infty;
$$ 
in this paper, 
$\bN_0=\{0,1,2,\ldots\}$ denotes the set of non-negative integers
and 
$\bN=\{1,2,\ldots\}$ the set of positive integers;
we also   use the usual notation 
$\langle \xi \rangle = \sqrt{1+|\xi|^2}$,
and
$\partial_j=\partial_{x_j}$ for the partial derivatives in $\bR^n$
as well as
$\partial^\alpha=\partial_1^{\alpha_1}\partial_2^{\alpha_2}\ldots$ etc.

To each symbol $a\in S^m(\bR^n\times \bR^n)$, 
we associate the operator $\Op(a)$ defined via
\begin{equation}
\label{eq_Opaf}	
\Op(a) f(x) = \int_{\bR^n} \widehat f(\xi) e^{2i\pi x\cdot \xi} a(x,\xi) d\xi, \qquad x\in \bR^n, \ f\in \cS(\bR^n).
\end{equation}
Here, $\widehat f$ denotes the Euclidean Fourier transform of $f\in \cS(\bR^n)$:
$$
 \widehat f(\xi) = \cF_{\bR^n} f(\xi) =\int_{\bR^n} f(x) e^{-2i\pi x\cdot \xi} dx.
 $$
We denote by $\Psi^m(\bR^n)= \Op(S^m(\bR^n\times \bR^n))$   
the H\"ormander class of operators of order $m\in \bR$ on $\bR^n$.
Recall that $\Op$ is one-to-one on $S^m$ 
and thus that $\Psi^m(\bR^n)$ inherit a structure of Fr\'echet space.

The class of smoothing symbols  $S^{-\infty}=\cap_{m\in \bR} S^m$ is equipped with the projective limit topology. The class of smoothing symbols is denoted by $\Psi^{-\infty} (\bR^n)=\Op(S^{-\infty})$.

The space $\cup_{m\in \bR} \Psi^m(\bR^n)$ is an algebra of operators, and furthermore the composition mapping $(T_1,T_2)\mapsto T_1T_2$ is continuous $\Psi^{m_1}(\bR^n)\times \Psi^{m_2}(\bR^n)\to \Psi^{m_1+m_2}(\bR^n)$.
This algebra of operators is also 
stable under taking the adjoint with 
taking the formal adjoint $T\mapsto T^*$  being continuous $\Psi^m(\bR^n)\to \Psi^m(\bR^n)$.
It acts on e.g. Sobolev spaces since an operator $T\in \Psi^m(\bR^n)$ maps the Sobolev spaces $H^s$ to $H^{s-m}$ boundedly.

\subsubsection{Expansions of symbols in $\bR^n$}
\label{subsec_exp}

A symbol $a\in S^m(\bR^n\times \bR^n)$ admits an \emph{expansion} when $\alpha_{m-j}\in S^{m-j}(\bR^n\times \bR^n)$ and 
$a-\sum_{j=0}^{N} \alpha_{m-j} \in S^{m-N-1}(\bR^n\times \bR^n)$. We then write $a\sim \sum_{j\in \bN_0} \alpha_{m-j}$
Any such expansion yields a symbol and such an expansion characterises a symbol modulo $S^{-\infty}$:
\begin{lemma}
\label{lem_expansion}
Let $m\in \bR$. Given a sequence of functions
 $(\alpha_{m-j})_{j\in \bN_0}$ satisfying
$\alpha_{m-j}\in S^{m-j}(\bR^n\times \bR^n)$, there exists a symbol $a\in S^m(\bR^n\times \bR^n)$ admitting the expansion $\sum_{j\in \bN_0} \alpha_{m-j}$.
Furthermore, if another symbol $b\in S^m(\bR^n\times \bR^n)$ admits  the same expansion, 
then $a-b\in S^{-\infty}(\bR^n\times \bR^n)$.
\end{lemma}
\begin{proof}
	We fix  
$\psi\in C^\infty(\bR^n)$ with $\psi(\xi)=0$ for $|\xi|\leq 1/2$ and $\psi(\xi)=1$ for $|\xi|\geq 1$. For $t>0$ we set $\psi_t(\xi)=\psi(t\xi)$ and we observe
$$
\|\alpha_{m-j} \psi_t\|_{S^{m},N}
\leq
C t^{j}
\|\alpha_{m-j}\|_{S^{m-j},N}\quad \mbox{with}\ C=C_{N,\psi}>0 .
$$
Therefore, choosing a sequence $(t_j)_{j\in \bN_0}$ converging to 0 and satisfying  $C_{N,\psi}  t_j^{j} \|\alpha_{m-j}\|_{S^{m-j},j}\leq 2^{-j}$ with $N=j$ for each $j\in\bN_0$, the sum $a=\sum_{j=0}^{+\infty} \alpha_{m-j} \psi_{t_j}$ is converging in the Fr\'echet space $S^m$ with 
$$
\|a - \sum_{j=0}^M \alpha_{m-j} \psi_{t_j} \|_{S^{m},N}
\leq
\sum_{j=M+1}^{+\infty} \|\alpha_{m-j} \psi_{t_j} \|_{S^{m},N}
\leq 
2^{-M},
$$
for $M\geq N$.
The rest of the statement follows easily.
\end{proof}

Expansions are useful when considering operations such as compositions and taking the adjoint modulo smoothing operators.
Indeed, if $a\in S^{m_a}(\bR^n\times \bR^n)$ and $b\in S^{m_b}(\bR^n\times \bR^n)$ then 
	$\Op(a)\Op(b)\in \Psi^{m_a+m_b}(\bR^n)$
	with symbol $a\# b$ having the expansion
$$
	a\# b \sim  
	\sum_{j\in \bN_0}\sum_{|\alpha|=j} \frac {(2i\pi)^{-|\alpha|}}{\alpha !} \partial_\xi^\alpha a \ \partial_x^\alpha b. 
$$

A symbol $a\in S^m(\bR^n\times \bR^n)$ with $m\in \bR$ admits a \emph{poly-homogeneous expansion with complex order} $\tilde m$ 
when it admits an expansion $a\sim \sum_{j\in \bN_0} \alpha_{m-j}$ 
where each function  $\alpha_{m-j}(x,\xi)$ is  $(\tilde m-j)$-homogeneous in $\xi$ for $|\xi|\geq 1$; here $\tilde m\in \bC$ with $\Re \tilde m=m$ and the homogeneity for $|\xi|\geq 1$ means that $\alpha_{m-j}(x,\xi) = |\xi|^{\tilde m-j}\alpha_{m-j}(x,\xi/|\xi|)$ for any $(x,\xi)\in \bR^n\times\bR^n$ with $|\xi|\geq 1$. 
We write the poly-homogeneous expansion as $a\sim_h \sum_j a_{\tilde m-j} $ where $a_{\tilde m-j}(x,\xi)=|\xi|^{\tilde m-j}\alpha_{m-j}(x,\xi/|\xi|)\in C^\infty (\bR^n \times (\bR^n\backslash\{0\}))$ is homogeneous of degree $\tilde m-j$ in $\xi$.  
We may call $a_{\tilde m}$ the (homogeneous) principal symbol of $a$ or of $A$ and $\tilde m$ the complex order of $a$ or $A$.

\subsubsection{The pseudo-differential calculus on an open set $\Omega\subset \bR^n$}

In this paper, $\Omega$ always denotes a non-empty open subset of $\bR^n$.

We denote by $S^m_{loc}(\Omega\times \bR^n)$
the set of smooth functions $a\in C^\infty(\Omega\times \bR^n)$ such that $\chi a\in S^m(\bR^n\times \bR^n)$
for any cut-off $\chi\in C_c^\infty(\Omega)$.
Equivalently, this is 
the Fr\'echet space of smooth functions $a:\bR^n\times \bR^n\to \bC$ satisfying 
$$
\|a\|_{S^m_{loc},N}:=
\sup_{\substack{
\alpha,\beta\in\bN_0^n\\ |\alpha|,|\beta|\leq N}}
\sup_{(x,\xi) \in K_N\times\bR^n}
\langle \xi \rangle^{|\alpha|-m}|\partial_x^\beta \partial_\xi^\alpha a(x,\xi)|<\infty,
$$ 
where $(K_N)_{N\in \bN_0}$ is an increasing sequence of non-empty compact subsets of $\Omega$ satisfying $\cup_{N\in \bN_0}K_N=\Omega$.

We say that $a\in S^m_{loc}(\Omega\times \bR^n)$ is $x$-compactly supported in $\Omega$ 
when there exists a compact $K$ containing the supports of all the functions $x\mapsto a (x,\xi)$, $\xi\in \bR^n$.

To each symbol $a\in S^m_{loc}(\Omega\times \bR^n)$, 
we associate the operator $\Op(a)$ defined via
\eqref{eq_Opaf}	for $x\in \Omega$.
This defines a continuous linear operator, still denoted by $\Op(a)$, 
from $C_c^\infty(\Omega)$ to $C^\infty(\Omega)$, 
from the space $\cE'(\Omega)$ of compactly supported distributions of $\Omega$ to the space $\cD'(\Omega)$ of distributions of $\Omega$, 
from the space $H^{s}_{comp}(\Omega)$ of  compactly supported distribution in $H^s$ to the space $H^{s-m}_{loc}(\Omega)$ of distributions which are locally in $H^{s-m}$;
recall that the Fr\'echet space $H^s_{loc}(\Omega)$
is equipped with the semi-norms given by 
$$
\|f\|_{H^s_{loc},N} = \|\chi_N f\|_{H^s_{loc}}
$$
for some sequence of functions $(\chi_N)\subset C_c^\infty(\Omega)$ with $\chi_N =1$ on the compact set $K_N$ as above.

Moreover, the map $a\mapsto \Op(a)$ is continuous from to $S^m_{loc}(\Omega\times \bR^n)$ to the topological space of continuous linear maps from $H^s_{comp}(\Omega)$ to $H^{s-m}_{loc}(\Omega)$.
We denote by $\Psi^m(\Omega):=\Op(S^m_{loc}(\Omega))$ the resulting Fr\'echet space of operators.

By the Schwartz kernel theorem, $\Op(a)$ has an integral kernel $K \in \cD'(\Omega\times \Omega)$ given formally via
$$
\Op(a)f(x)
=\int_{\Omega} K(x,y) f(y) dy, \qquad f\in C_c^\infty(\Omega).
$$
Recall that $a(x,\xi) = \cF_{\bR^n}( K(x, x- \cdot))$
and that $K$ is smooth away from the diagonal $x=y$. 

\begin{lemma}
\label{lem_tr_Psim<-n}
For any operator $A\in \Psi^m(\Omega)$  with $m<-n$,
 its integral kernel $K$ is continuous on $\Omega\times\Omega$.
 If in addition the symbol $a$ of $A$ is compactly $x$-supported, then  $A$  is trace-class with trace:
$$
\tr (A) = \int_{\Omega} K(x,x) dx 
 =\int_{\Omega\times\bR^{n}} a(x,\xi) \ dx d\xi.
$$
\end{lemma}

The class of smoothing symbols  $S^{-\infty}_{loc}(\Omega\times \bR^n)=\cap_{m\in \bR} S^m_{loc}(\Omega\times \bR^n)$ is equipped with the projective limit topology. 
They can be characterised as follows:
\begin{lemma}
\label{lem_smoothing}
Let $a\in \cup _{m\in \bR}S^m_{loc}(\Omega\times \bR^n)$.
Then $a\in S^{-\infty}_{loc}(\Omega\times \bR^n)$ if and only if the integral kernel $K_a$ of $\Op(a)$ is smooth on $\Omega\times\Omega$.
Furthermore, the mapping $a\mapsto K_a$ is an isomorphism between the topological spaces $S^{-\infty}_{loc}(\Omega\times \bR^n)$ and 
$C^\infty(\Omega\times \Omega)$.
\end{lemma}

We say that a symbol $a\in S^m_{loc}(\Omega\times \bR^n)$ admits an expansion or a poly-homogeneous expansion when $\chi a \in S^m(\bR^n\times \bR^n)$ does in the sense of Section \ref{subsec_exp}
for any $\chi\in C_c^\infty(\Omega)$; we then write 
$a\sim \sum_{j\in \bN_0} \alpha_{m-j}$ and $a\sim_h \sum_j a_{\tilde m-j} $.

\medskip

The functional properties of operators in $\cup_{m\in \bR} \Psi^m(\Omega)$ do not allow for the composition of the operators. 
To obtain the property of composition, one has to consider pseudo-differential operators  which are properly supported, that is, when both $\Omega$-projections of the support of the integral kernel are proper. 
The space $\Psi_{ps}^m(\Omega)$   of properly supported operators in $\Psi^m(\Omega)$ has the following properties:

\begin{proposition}
\label{prop_Psips}
	Any operator in $\Psi_{ps}^m(\Omega)$ 
 sends continuously $C_c^\infty(\Omega)$ to itself, $C^\infty(\Omega)$ to itself, $\cD'(\Omega)$ to itself;
 it also defines a continuous linear operator from $H^s_{comp}(\Omega)$ to $H^{s-m}_{comp}(\Omega)$ 
	and from $H^s_{loc}(\Omega)$ to $H^{s-m}_{loc}(\Omega)$. 
Furthermore, for any $m_1,m_2\in \bR$, the composition $(A,B)\mapsto AB$ is a continuous bilinear mapping $\Psi_{ps}^{m_1}(\Omega)\times \Psi_{ps}^{m_2}(\Omega) \to \Psi_{ps}^{m_1+m_2}(\Omega)$.
\end{proposition}

Any operator in $\Psi^m(\Omega)$ may be written as a properly supported one up to a smoothing operator:

\begin{proposition}
\label{prop_OpPhi}	
	We fix a locally finite partition of unity 
$\Phi=(\phi_k)_{k\in \bN_0}\subset C_c^\infty(\Omega)$, that is,
$1_\Omega=\sum_{k=0}^{+\infty} \phi_k$, 
and for each $k\in \bN_0$, 
 the  set $J_k$ of indices $k'$ such that the supports of $\phi_k$ and $\phi_{k'}$ have a non-empty intersection is finite.

Let $m\in \bR$. For each symbol $a\in S^m_{loc}(\Omega\times\bR^n)$, 
the operator $\Op_\Phi(a) \in \Psi^m_{ps}(\Omega)$ defined via
$$
\Op_\Phi ( a) := \sum_{k=0}^\infty \sum_{k'\in J_k}
\phi_k\Op(a)\phi_{k'},
$$
is in $\Psi^m_{ps}(\Omega)$
with $\Op(a) - \Op_\Phi(a) \in \Psi^{-\infty}(\Omega)$.
Furthermore, 
the linear map $a\mapsto \Op_\Phi (a)$ is continuous from $S^m_{loc}(\Omega\times \bR^n)$ to $\Psi^m_{ps}(\Omega)$, and
the linear map $a\mapsto \Op(a) - \Op_\Phi(a)$ is continuous from $S^m_{loc}(\Omega\times \bR^n)$ to $\Psi^{-\infty}(\bR^n)$.

Moreover, for any $m_1,m_2\in \bR$, the bilinear map $(a,b)\mapsto \Op_\Phi (a)\Op_\Phi(b) - \Op_\Phi(ab)$ is continuous from $S^{m_1}_{loc}(\Omega\times \bR^n)\times S^{m_2}_{loc}(\Omega\times \bR^n)$ to $\Psi_{ps}^{m_1+m_2-1}(\Omega)$.
More generally, 
the bilinear map 
$$
(a,b)\mapsto \Op_\Phi (a)\Op_\Phi(b) - 
\sum_{|\alpha|<N}\frac {(2i\pi)^{-|\alpha|}}{\alpha !}  \Op_\Phi\left(
 \partial_\xi^\alpha a \ \partial_x^\alpha b\right),
 $$
is continuous from $S^{m_1}_{loc}(\Omega\times \bR^n)\times S^{m_2}_{loc}(\Omega\times \bR^n)$ to $\Psi_{ps}^{m_1+m_2-N}(\Omega)$.
\end{proposition}
	
The space $\Psi^{m}_{cl}(\Omega)$ of classical pseudo-differential operators 
of order $m\in \bC$ on $\Omega$ 
is the space of pseudo-differential operators $A\in \Psi^{\Re m}_{ps}(\Omega)$
whose symbols admit  poly-homogeneous expansions.

\subsubsection{The pseudo-differential calculus on a manifold}
Let $M$ be a smooth compact connected manifold of dimension $n$ without boundary.
The space $\Psi^{m}(M)$ of pseudo-differential operators 
of order $m$ on $M$ is the space of operators which are locally transformed by some (and then any) coordinate cover to pseudo-differential operators in $\Psi^{m}(\bR^{n})$;
that is, the operator $A:\cD(M)\to \cD'(M)$ such that there exists a finite open cover $(\Omega_{j})_{j}$ of $M$,
a subordinate partition of unity $(\chi_{j})_{j}$
and diffeomorpshims $F_{j}:\Omega_{j}\to \cO_{j}\subset \bR^{n}$
that transform the operators $\chi_{k }A\chi_{j}$ into operators in $\Psi^{m}(\bR^n)$.

The space $\Psi^{m}_{cl}(M)$ of classical pseudo-differential operators 
of order $m$ on $M$ 
is the space of pseudo-differential operator $A:\cD(M)\to \cD'(M)$
such that each of the operators $\chi_{k }A\chi_{j}:\cD(\Omega_{j})\to \cD'(\Omega_{k})$  in $\Psi^m(\bR^n)$ as above admits a poly-homogeneous expansion.

\subsection{Definition of the non-commutative residue  via local symbols}
\label{subsec_def_res}

Here, we recall the definition of the non-commutative residue via local symbols.
The original references are \cite{guillemin} and \cite{Wod_84,Wod_87}.
See also \cite{Fed++Schrohe,Grubb+Schrohe,schrohe,scott}.

Let $\Omega$ be an open subset in $\bR^n$ with $n\geq 2$.
Let $A\in \Psi^m_{cl} (\Omega)$ with symbols $a\sim_h \sum_{j\in \bN_0} a_{m_j}$.
If $m$ is an integer in 
$$
\bZ_n:=\{-n,-n+1,-n+2,\ldots\},
$$
then we set
$$
\res_x(A) := \int_{\bS^{n-1}} a_{-n}(x,\xi) \ d\sig(\xi);
$$ 
in this paper, $\sig$ denotes the surface measure on the Euclidean unit sphere $\bS^{n-1} \subset \bR^n$ which may be obtained as the restriction to $\bS^{n-1}$ of the $(n-1)$-form $\sig$ defined on $\bR^n$ by
$\sum_{j=1}^n (-1)^{j+1} \xi_j 
\ d\xi_1 \wedge \ldots \wedge d\xi_{j-1} \wedge d\xi_{j+1} \wedge \ldots \wedge d\xi_n$.
If $m\in \bC\backslash \bZ_n$, then we set $\res_x(A):=0$.

\begin{lemma}
\label{lem_intS=0}
Let $b\in C^\infty(\bR^n\backslash\{0\})$ be homogeneous of degree $m\in \bC$
and let $\alpha\in \bN_0^n$.
If $\alpha\not=0$ and $m-|\alpha|=-n$,
then 
$$
\int_{\bS^{n-1}} \partial^\alpha b(\xi) d\varsigma(\xi)=0.
$$
\end{lemma}

Lemma \ref{lem_intS=0}
 may be proved using the fact that
the non-commutative residue is  a trace on $\cup_{m\in \bZ}\Psi^m_{cl}(\Omega)$
\cite{Wod_87},
hence vanishes on commutators.
Indeed, if $a\in S^m_{cl}(\Omega)$,
as $[\Op_\Omega(a),x_j]=\Op_\Omega(\partial_{\xi_j} a)$,
the non-commutative residue of $\Op_\Omega(\partial_j a)$ is zero and so is
the integral over $\bS^{n-1}$ of $(\partial_j a)_{-n}$.
However, we prefer giving here an elementary argument.

\begin{proof}[Proof of Lemma \ref{lem_intS=0}]
Without loss of generality, we may assume  $\partial^\alpha=\partial_1$.
Let $\chi\in \cD(0,\infty)$ be a non-negative function such that $\int_0^\infty\chi(r) dr/r=1$.
On the one hand, 
for any $a\in C^\infty(\bR^n\backslash\{0\})$ homogeneous of degree $-n$,
a polar change of coordinate implies:
$$
\int_{\bR^n} a(\xi) \chi(|\xi|) d\xi
=
\int_0^\infty\int_{\bS^{n-1}} a(\xi)
 \chi(r) d\varsigma(\xi)\frac {dr}r
 =
 \int_{\bS^{n-1}} a(\xi)d\varsigma(\xi).
$$
On the other hand, we have for $a=\partial_1 b$ starting with an integration by parts and then performing a polar change of coordinate:
$$
\int_{\bR^n} \partial_1 b(\xi) \chi(|\xi|) d\xi
=
-\int_{\bS^{n-1}} b(\xi) \frac{\xi_1}{|\xi|} d\varsigma(\xi)
\int_0^\infty \chi'(r) dr.
$$
We conclude with $\int_0^\infty \chi'(r) dr=0$.
\end{proof}

With Lemma \ref{lem_intS=0}, one checks readily that 
if $\tau:\Omega_1\to \Omega_2$ is (smooth) diffeomorphism between two  open sets $\Omega_1,\Omega_2\subset \bR^n$
and if $A\in \Psi^{m}_{cl}(\Omega_1)$, 
then
$$
|\tau'(x)|\res_{\tau(x)} (\tau^* A) =  \res_{x}(A).
$$ 
Hence $\res_x$  yields a 1-density on a compact manifold $M$, for which we keep the same notation $\res_x$.

\begin{definition}
\label{def_residue_density}
The function $x\mapsto \res_x A$ is the \emph{residue density} 
on an open subset $\Omega\subset\bR^n$ or on an $n$-dimensional compact manifold $M$ with $n\geq 2$.
The corresponding integral on $M$ 
$$
\res(A):=\int_M \res_x(A)
$$
or on $\Omega$ when $\res_x(A)$ is integrable
$$
\res(A):=\int_\Omega \res_x(A) \ dx, 
$$
is called  the \emph{non-commutative residue} of $A$.
\end{definition}

The non-commutative residue is a trace on $\cup_{m\in \bC} \Psi^m_{cl}(M)$
in the sense that it is a linear functional on $\cup_{m\in \bC} \Psi^m_{cl}(M)$
which vanishes on commutators.
If $M$ is connected, then any other trace on $\cup_{m\in \bC} \Psi^m_{cl}$ is a multiple of $\res$.

\subsection{Description via heat or power expansions}
\label{subsec_heatexp}

In this section, we recall the expansions of kernels and traces of pseudo-differential operators due to Grubb and Seeley, and their relations with the non-commutative residue.

Recall that a complex sector is a subset of $\bC\backslash\{0\}$ of the form 
$\Gamma=\Gamma_I:=\{re^{i\theta} \ :\  r>0,\ \theta\in I\}$
where $I$ is a subset of $[0,2\pi]$;
it is closed (in $\bC\backslash\{0\}$) when $I$ is closed.	

\begin{theorem}{\cite[Theorem 2.7]{Grubb+Seeley}}
\label{thm_GS}
Let $M$ be a compact smooth manifold of dimension $n\geq 2$
or let $\Omega$ be a bounded open subset in $\bR^n$.
Let $\cL\in \Psi_{cl}$ be an invertible elliptic operator of order $m_0\in \bN$. 
We assume that there exists a complex sector $\Gamma$ such that  the homogeneous principal symbol of $\cL$ in local coordinates satisfies  
$$
\ell_{m_0}(x,\xi)\notin -\Gamma^{m_0} =\{-\mu^{m_0} : \mu \in \Gamma\}
\quad \mbox{when}\ |\xi|=1.
$$

Let $A\in \Psi_{cl}^m$ and let $k\in \bN$ such that $-k m_0 +m<-n$.
The kernel $K(x,y,\lambda)$ of 	$A(\cL-\lambda)^{-k}$ is continuous and satisfies on the diagonal
\begin{equation}
\label{eq1_thm_GS}	
K(x,x,\lambda)\sim 
\sum_{j=0}^\infty c_j(x) \lambda^{\frac{n+m-j}{m_0}-k}
+ 
\sum_{l=0}^\infty \left(c'_l(x)\log \lambda +c''_l(x)\right) \lambda^{-l-k},
\end{equation}
for $\lambda \in -\Gamma^{m_0}$,
$|\lambda|\to \infty$, uniformly in closed sub-sectors of $\Gamma$.
The coefficients $c_j(x)$ and $c'_l(x)$ are determined from the symbols 
$a\sim\sum_j a_{m-j}$ and $\ell\sim \sum_j \ell_{m_0-j}$ in local coordinates, while the coefficients $c''_l(x)$ are in general globally determined.

As a consequence, one has for the trace
\begin{equation}
\label{eq2_thm_GS}	
\tr \left(A(\cL-\lambda)^{-k}\right)
\sim 
\sum_{j=0}^\infty c_j \lambda^{\frac{n+m-j}{m_0}-k}
+ 
\sum_{l=0}^\infty \left(c'_l\log \lambda +c''_l\right) \lambda^{-l-k},
\end{equation}
where the coefficients are the integrals over $M$ of the traces of the coefficients defined in \eqref{eq1_thm_GS}.
\end{theorem}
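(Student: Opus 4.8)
The plan is to follow Seeley's method: build a parametrix for $\cL-\lambda$ in a pseudo-differential calculus carrying the spectral parameter, and read off the diagonal kernel of $A(\cL-\lambda)^{-k}$ from the resulting symbol. First I would work with parameter-dependent symbols on $\Omega\times\bR^n\times(-\Gamma^{m_0})$ that are poly-homogeneous jointly in $(\xi,\mu)$, where $\mu:=|\lambda|^{1/m_0}$. The leading one is $q_{-m_0}(x,\xi,\lambda)=(\ell_{m_0}(x,\xi)-\lambda)^{-1}$, which is well defined and bounded by $C(|\xi|^{m_0}+|\lambda|)^{-1}$ uniformly on closed sub-sectors, precisely because the hypothesis $\ell_{m_0}(x,\xi)\notin-\Gamma^{m_0}$ for $|\xi|=1$ keeps $\ell_{m_0}(x,\xi)-\lambda$ away from $0$. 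Running the usual recursion for the symbol product then produces, for every $N$, an approximate resolvent $Q_N(\lambda)$ with $(\cL-\lambda)Q_N(\lambda)=\id+R_N(\lambda)$, where $R_N(\lambda)$ has parameter-dependent order $-N$; taking $N$ large, $R_N(\lambda)$ and the difference between $A\,Q_N(\lambda)^k$ and $A(\cL-\lambda)^{-k}$ contribute $O(|\lambda|^{-M})$, for any prescribed $M$, to all the quantities below. Hence it suffices to analyse $A\,Q_N(\lambda)^k$, whose symbol $\sigma(x,\xi,\lambda)\sim\sum_{j\ge0}\sigma_{d-j}$, with $d:=m-km_0$, has components $\sigma_{d-j}$ homogeneous of degree $d-j$ in $(\xi,\mu)$ that are computed explicitly, and locally, from $a\sim\sum_j a_{m-j}$ and $\ell\sim\sum_j\ell_{m_0-j}$ in the chart.

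Next I would write the diagonal kernel as $K(x,x,\lambda)=(2\pi)^{-n}\int_{\bR^n}\sigma(x,\xi,\lambda)\,d\xi+O(|\lambda|^{-M})$, absolutely convergent since $d<-n$, and split it into the regions $|\xi|\le1$, $1\le|\xi|\le\mu$ and $|\xi|\ge\mu$. On $|\xi|\le1$ one expands $\sigma$ in powers of $\lambda^{-1}$, producing terms $\lambda^{-p-k}$ with locally computed coefficients $\int_{|\xi|\le1}b_p\,d\xi$; on $|\xi|\ge\mu$ one integrates the homogeneous components $\sigma_{d-j}$ and rescales $\xi=\mu\zeta$, producing pure powers $\mu^{\,n+d-j}=\lambda^{(n+m-j)/m_0-k}$ with local coefficients; on the transition region $1\le|\xi|\le\mu$ the same rescaling turns $\int\sigma_{d-j}\,d\xi$ into $\mu^{\,n+d-j}$ times an integral of the rescaled $\sigma_{d-j}$ over $1/\mu\le|\zeta|\le1$, whose expansion near $\zeta=0$ contributes further powers of $\mu$ on the same scale, together with a term $\mu^{\,n+d-j}\log\mu$ whenever a piece of that expansion is homogeneous of degree exactly $-n$ in $\zeta$ --- which forces $j=n+m+lm_0\in\bN_0$, hence $m\in\bZ$, and gives the $c'_l(x)\log\lambda\,\lambda^{-l-k}$ terms. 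Summing the three contributions, the cutoff-dependent constants cancel because $K$ does not depend on the split, and one obtains \eqref{eq1_thm_GS}; the coefficients $c_j(x)$ and $c'_l(x)$ come only from the homogeneous components $\sigma_{d-j}$ and are therefore local, whereas each $c''_l(x)$ also absorbs the contribution of the smoothing remainder $R_N(\lambda)$ and is in general only globally determined.

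Finally, \eqref{eq2_thm_GS} follows by integrating over the diagonal: $A(\cL-\lambda)^{-k}$, having order $d<-n$, is trace class with $\tr\big(A(\cL-\lambda)^{-k}\big)=\int_M K(x,x,\lambda)$, so one integrates \eqref{eq1_thm_GS} term by term, each coefficient becoming the integral over $M$ of the corresponding density, and the uniformity in closed sub-sectors is inherited from the uniform parameter-elliptic estimates. The main obstacle in making this rigorous is the bookkeeping at the resonant powers $\lambda^{-l-k}$, where a local homogeneous-symbol contribution, a low-frequency contribution and the smoothing remainder all meet: one has to verify that the cutoff-dependence and the would-be double poles cancel exactly, so that $c'_l$ and $c''_l$ are well defined and the remainder after truncating at order $N$ is genuinely $o\big(\lambda^{(n+m-N)/m_0-k}\big)$ uniformly; the extraction of $c_j$ from the homogeneous symbols is, by comparison, routine.
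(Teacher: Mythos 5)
First, a point of comparison that matters here: the paper does not prove Theorem \ref{thm_GS} at all --- it is quoted verbatim from \cite[Theorem 2.7]{Grubb+Seeley} as background, so there is no internal argument to measure your proposal against; it has to be judged against the original Grubb--Seeley proof.

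Measured that way, your sketch reproduces the broad Seeley-type strategy (parameter-dependent parametrix for $\cL-\lambda$, diagonal kernel as a $\xi$-integral of the symbol, splitting into $|\xi|\le 1$, $1\le|\xi|\le\mu$, $|\xi|\ge\mu$, logarithms from the degree $-n$ pieces), but it contains a genuine gap exactly where the theorem is hard. Your opening premise --- that the parametrix lives in a calculus of symbols poly-homogeneous jointly in $(\xi,\mu)$ with uniform bounds of the type $C(|\xi|^{m_0}+|\lambda|)^{-1}$ and with remainders contributing $\cO(|\lambda|^{-M})$ for every $M$ --- is the \emph{strongly} parameter-elliptic picture, which is valid when $\cL$ is a differential operator (so that the $\ell_{m_0-j}$ are polynomials, smooth at $\xi=0$). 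The theorem, however, is stated for classical pseudo-differential $\cL$ and $A$ of arbitrary complex order: there the homogeneous components are singular at $\xi=0$ and homogeneous only for $|\xi|\ge1$, the parametrix terms do not satisfy joint symbol estimates down to $\xi$ small, and the regions $|\xi|\lesssim1$ and the transition zone are not controlled by the calculus you invoke. Handling precisely this, uniformly in $\lambda$ on closed sub-sectors, is the content of Grubb and Seeley's weakly parametric symbol classes, and it is also where the coefficients $c''_l$ acquire their global (non-symbolic) character; your sketch offers no substitute for that machinery. In addition, the step you yourself identify as ``the main obstacle'' --- the cancellation of the cutoff dependence at the resonant powers $\lambda^{-l-k}$, the well-definedness of $c'_l$ and $c''_l$, and the uniform remainder estimate after truncation in \eqref{eq1_thm_GS} and its integrated version \eqref{eq2_thm_GS} --- is deferred rather than carried out, yet it is the substance of the theorem rather than routine bookkeeping. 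As it stands, the proposal is a plausible outline of the known proof, not a proof.
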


A closer inspection of the first coefficient in the expansions above shows that they are in relation with the non-commutative residue, see  \cite[Section 1]{schrohe}:

\begin{corollary}
We continue with the setting and results of Theorem \ref{thm_GS}.
The coefficients $c'_0(x)$ and $c'_0$ in \eqref{eq1_thm_GS} and \eqref{eq2_thm_GS} satisfy
$$
c'_0 (x) =  \frac {(-1)^k}{(2\pi)^{n}m_0}\res_x (A) 
\qquad \mbox{and}\qquad
c'_0 =  \frac {(-1)^k}{(2\pi)^{n}m_0}\res (A).
$$
\end{corollary}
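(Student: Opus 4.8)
The plan is to extract the claimed identity for $c'_0(x)$ directly from the structure of the expansion \eqref{eq1_thm_GS}, by isolating which homogeneous piece of the parametrix symbol produces the $\lambda^{-k}\log\lambda$ term. First I would recall the standard construction of the resolvent parametrix: writing $B(\lambda)$ for a parametrix of $(\cL-\lambda)^{-1}$ in the parameter-dependent calculus, the symbol of $A(\cL-\lambda)^{-k}$ in a local chart has a poly-homogeneous expansion $\sum_{j\geq 0} q_{m-km_0-j}(x,\xi,\lambda)$ in the variables $(\xi,\lambda^{1/m_0})$ jointly, with $q_{m-km_0-j}$ homogeneous of degree $m-km_0-j$. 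The kernel on the diagonal is $K(x,x,\lambda)=(2\pi)^{-n}\int_{\bR^n} q(x,\xi,\lambda)\,d\xi$, and the issue is that the individual integrals $\int q_{m-km_0-j}\,d\xi$ diverge and must be regularised; it is precisely this regularisation that generates the $\log\lambda$ terms.

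Next I would perform the key computation: the $\log\lambda$ contribution arises only from the term $q_{-n}$, i.e. the component homogeneous of degree $-n$ in $(\xi,\lambda^{1/m_0})$. Splitting the integral over $|\xi|\leq 1$ and $|\xi|\geq 1$, substituting $\xi=r\omega$ with $\omega\in\bS^{n-1}$, and using homogeneity $q_{-n}(x,r\omega,\lambda)=r^{-n}q_{-n}(x,\omega,r^{-m_0}\lambda)$, the radial integral $\int_1^\infty r^{-n}\cdot r^{n-1}\,dr$ produces $\log$-divergence whose coefficient is $(2\pi)^{-n}\int_{\bS^{n-1}} q_{-n}(x,\omega,0)\,d\sigma(\omega)$ after a change of variables in $\lambda$; the factor $m_0$ appears from the change of variable $\lambda\mapsto\lambda^{1/m_0}$. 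So the point reduces to identifying $q_{-n}(x,\xi,0)$, the value at $\lambda=0$ of the degree-$(-n)$ part of the symbol of $A(\cL-\lambda)^{-k}$. But setting $\lambda=0$ in the resolvent parametrix just recovers (the homogeneous components of) a parametrix of $A\cL^{-k}$, which is an operator of order $m-km_0$; hence $q_{-n}(x,\xi,0)$ is exactly the homogeneous component of degree $-n$ of the symbol of $A\cL^{-k}$, namely the symbol entering $\res_x(A\cL^{-k})$. Thus
$$
c'_0(x)=\frac{1}{(2\pi)^n m_0}\res_x\!\left(A\cL^{-k}\right).
$$

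Finally I would need the identity $\res_x(A\cL^{-k})=(-1)^k\res_x(A)$. For this I would argue that $\cL^{-k}-(-1)^k\,\id$ lies in the closure of commutators, or more directly use that the residue density of a product transforms multiplicatively on principal symbols only through the top term: since the principal symbol of $(\cL-\lambda)^{-k}$ at $\lambda=0$ is $\ell_{m_0}^{-k}$, whereas the relevant asymptotic regime in the original $\lambda\to\infty$ expansion effectively extracts the coefficient with the opposite sign convention built into $(\cL-\lambda)^{-k}=(-1)^k(\lambda-\cL)^{-k}$, one gets the sign $(-1)^k$; alternatively, appeal to the known functorial property that $\res(AB)$ depends only on the product in the algebra and $\cL^{-k}$ is, modulo lower order and the residue's vanishing on commutators, interchangeable with $(-\id)^k$ up to the smoothing error --- this is the standard fact, e.g. from \cite{schrohe}, that the residue density of $A(\cL-\lambda)^{-k}$ equals $(-1)^k$ times that of $A$. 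The global statement for $c'_0$ then follows by integrating over $M$ (or $\Omega$) and using Definition \ref{def_residue_density}. I expect the main obstacle to be bookkeeping the precise power of $2\pi$ and the sign $(-1)^k$ consistently with the normalisation of \eqref{eq1_thm_GS}: the homogeneity-counting that isolates $q_{-n}$ is routine, but matching conventions for the Fourier transform, the surface form $\sigma$, and the direction of the parameter $\lambda$ requires care, and this is exactly where an off-by-a-constant error would hide.
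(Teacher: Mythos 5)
Your overall strategy (isolate the symbol term of the parametrix of $A(\cL-\lambda)^{-k}$ responsible for the $\lambda^{-k}\log\lambda$ term, reduce it to a sphere integral, and track the constants) is the right one, and it is essentially what the reference \cite[Section 1]{schrohe} cited by the paper does --- the paper itself offers no proof beyond that citation. However, the execution has a genuine gap in the key identification step. In the regime of Theorem \ref{thm_GS} one has $m-km_0<-n$, so the jointly homogeneous components of the symbol of $A(\cL-\lambda)^{-k}$ have degrees $m-km_0-j<-n$ for all $j\geq 0$: there simply is no component of joint degree $-n$ in $(\xi,\lambda^{1/m_0})$. The term that produces the $l=0$ log in \eqref{eq1_thm_GS} has joint degree $-n-km_0$, namely $a_{-n}(x,\xi)\,(\ell_{m_0}(x,\xi)-\lambda)^{-k}$: it is the only composition term carrying the resolvent to the exact power $k$ (every $x$-derivative of the parametrix and every lower-order parametrix term raises that power, and those terms feed the logs at $l\geq 1$). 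Because of this mis-identification, your intermediate formula $c'_0(x)=\frac{1}{(2\pi)^n m_0}\res_x(A\cL^{-k})$ collapses: $A\cL^{-k}$ has order $m-km_0<-n$, so its symbol has no degree $-n$ component and $\res_x(A\cL^{-k})=0$ identically, which would force $c'_0=0$. Likewise the identity $\res_x(A\cL^{-k})=(-1)^k\res_x(A)$ is false, and the justifications offered (that $\cL^{-k}-(-1)^k\id$ is in the closure of commutators, or that $\cL^{-k}$ is interchangeable with $(-\id)^k$ modulo smoothing) do not hold in any sense.

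The correct bookkeeping is as follows, and it also explains where $(-1)^k$ and $1/m_0$ really come from. Writing $\lambda=-\mu^{m_0}$ with $\mu\to+\infty$ (the sector $-\Gamma^{m_0}$, where $\ell_{m_0}-\lambda$ stays elliptic since $\ell_{m_0}>0$), the contribution of $a_{-n}(\ell_{m_0}-\lambda)^{-k}$ to the diagonal kernel is
\begin{equation*}
\frac{1}{(2\pi)^n}\int_{\bS^{n-1}} a_{-n}(x,\omega)\int_1^{\infty}\frac{dr}{r\,\bigl(r^{m_0}\ell_{m_0}(x,\omega)+\mu^{m_0}\bigr)^{k}}\,d\sigma(\omega)
=\frac{\mu^{-km_0}}{(2\pi)^n}\Bigl(\log\mu\int_{\bS^{n-1}}a_{-n}\,d\sigma+O(1)\Bigr),
\end{equation*}
and since $\mu^{-km_0}=(-1)^k\lambda^{-k}$ and $\log\mu=\frac{1}{m_0}\log|\lambda|$, the coefficient of $\lambda^{-k}\log\lambda$ is $\frac{(-1)^k}{(2\pi)^n m_0}\int_{\bS^{n-1}}a_{-n}(x,\omega)\,d\sigma(\omega)=\frac{(-1)^k}{(2\pi)^n m_0}\res_x(A)$; all other symbol terms, having numerator degree $\neq -n$ or resolvent power $\geq k+1$, contribute only to the pure powers or to the logs with $l\geq1$. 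The global identity for $c'_0$ then follows by integrating the density over $M$ (or $\Omega$), as you indicated. So the sign $(-1)^k$ is purely the bookkeeping of the direction of the spectral parameter, not an algebraic identity for the residue of $A\cL^{-k}$.
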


Integrating the expansion \eqref{eq2_thm_GS} against  $\lambda^z$ or against $e^{-z\lambda}$ on well chosen $z$-contours yields the following expansions for operators $A$ of any order, see also
 \cite[Section 1]{schrohe}:

\begin{theorem}
\label{thm_schrohe}
Let $\cL\in \Psi_{cl}$ be as in Theorem \ref{thm_GS}.

For any $A\in \Psi_{cl}^m$, we have for $t\to 0$:
\begin{equation}
\label{eq_heat}	
\tr \left(Ae^{-t \cL}\right)
\sim 
\sum_{j=0}^\infty \tilde c_j t^{\frac{n+m-j}{m_0}}
+ 
\sum_{l=0}^\infty \left(\tilde c'_l\ln t  +\tilde c''_l\right) t^{l},
\end{equation}
and 
\begin{equation}
\label{eq_power}	
\Gamma(t)\tr \left(A\cL^{-t}\right)
\sim 
\sum_{j=0}^\infty \frac{\tilde c_j}{s+\frac{n+m-j}{m_0}} t^{\frac{n+m-j}{m_0}}
+ 
\sum_{l=0}^\infty \left(\frac{-\tilde c'_l}{(t+l)^2} +
\frac{\tilde c''_l}{t+l}\right) .
\end{equation}
In \eqref{eq_power}, the left hand side is meromorphic with poles as indicated by the right hand side.
The coefficients $\tilde c_j$, $\tilde c'_l$ and $\tilde c''_l$ 
are multiples of the corresponding $c_j$, $c'_l$ and $ c''_l$
in  \eqref{eq2_thm_GS}.
In particular, 
$$
\tilde c'_0 =  \frac {-1}{(2\pi)^{n}m_0}\res (A).
$$
\end{theorem}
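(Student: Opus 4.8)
The plan is to deduce both expansions from the resolvent trace expansion \eqref{eq2_thm_GS} by a Cauchy--Mellin calculus. The two ingredients are: (i) a representation of $e^{-t\cL}$, and then of $\cL^{-s}$, as a contour integral of the resolvent over a curve $\mathcal C\subset\bC$ encircling $\spec(\cL)$, whose unbounded part lies in the region where \eqref{eq2_thm_GS} holds uniformly and where the integrand decays --- this is possible under the sector hypothesis on $\cL$, and in the case of interest ($\cL$ self-adjoint positive) one may take $\mathcal C$ to run to $\infty$ along the rays $\arg\lambda=\pm\theta_0$ with $\theta_0>0$ small; (ii) the evaluation of the ensuing \emph{scalar} contour integrals, which are instances of Hankel's representation of $1/\Gamma$, together with its derivative in the exponent --- the latter being what produces the logarithmic terms.

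For \eqref{eq_heat}, fix $k\in\bN$ with $-km_0+m<-n$ as in Theorem \ref{thm_GS}. From the Cauchy representation of $e^{-t\cL}$, $k-1$ integrations by parts in $\lambda$ --- using $\partial_\lambda^{k-1}(\lambda-\cL)^{-1}=(-1)^{k-1}(k-1)!\,(\lambda-\cL)^{-k}$, or equivalently a residue computation on the spectral side --- give the identity
$$
\tr\!\big(Ae^{-t\cL}\big)=\frac{(-1)^{k-1}(k-1)!}{2\pi i\;t^{k-1}}\int_{\mathcal C}e^{-t\lambda}\,\tr\!\big(A(\lambda-\cL)^{-k}\big)\,d\lambda ,
$$
the trace being now legitimate since $A(\lambda-\cL)^{-k}$ is traceclass. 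Substituting \eqref{eq2_thm_GS} on the far part of $\mathcal C$ and rescaling $\mu=t\lambda$: each monomial $c_j\lambda^{\frac{n+m-j}{m_0}-k}$ produces a term $\tilde c_j\,t^{\frac{j-n-m}{m_0}}$ with $\tilde c_j$ a universal multiple of $c_j$ (the reciprocal $\Gamma$-factor being nonzero at the exponents that occur here), while each term $(c'_l\log\lambda+c''_l)\lambda^{-l-k}$ produces --- through the derivative in the exponent, evaluated at $-l-k$ --- a $t^l\log t$ contribution and a $t^l$ one. Collecting these, with the contributions of the bounded part of $\mathcal C$ accounting for the absence of spurious negative powers of $t$, gives \eqref{eq_heat}, the $\tilde c_j,\tilde c'_l,\tilde c''_l$ being universal multiples of $c_j,c'_l,c''_l$.

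For \eqref{eq_power}, the most transparent route uses that $\cL$ is invertible, so $\tr(Ae^{-t\cL})$ decays exponentially as $t\to+\infty$ and, for $\Re s$ large,
$$
\Gamma(s)\,\tr\!\big(A\cL^{-s}\big)=\int_0^{\infty}t^{s-1}\,\tr\!\big(Ae^{-t\cL}\big)\,dt .
$$
Splitting $\int_0^1+\int_1^\infty$, the second integral is entire in $s$; inserting \eqref{eq_heat} into the first and using $\int_0^1 t^{s-1+\mu}\,dt=(s+\mu)^{-1}$ and $\int_0^1 t^{s-1+l}\log t\,dt=-(s+l)^{-2}$ produces the meromorphic continuation of the left-hand side, the $\tilde c_j$ and $\tilde c''_l$ giving simple poles and the $\tilde c'_l$ giving double poles, as displayed in \eqref{eq_power}, while the remainder integral is holomorphic on a half-plane that recedes to the left as the truncation order grows. (Equivalently, one integrates \eqref{eq2_thm_GS} directly against $\lambda^{-s}$ over a contour around $\spec(\cL)$ avoiding $0$.)

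Finally, the universal constants above depend only on $j,l,k,m,n,m_0$, not on $A$ or $\cL$. Tracking the one attached to the $\log\lambda$-term with $l=0$ --- or, as a check, noting that it is forced by consistency with the Corollary following Theorem \ref{thm_GS} --- gives $\tilde c'_0=(-1)^{k-1}c'_0$, whence, with $c'_0=\frac{(-1)^k}{(2\pi)^n m_0}\res(A)$ from that Corollary,
$$
\tilde c'_0=-\frac{1}{(2\pi)^n m_0}\res(A),
$$
independently of $k$, as it must be. The main obstacle is none of this but the rigour of steps (i)--(ii): justifying the termwise use of \eqref{eq2_thm_GS} under the contour integral and bounding the ensuing error --- both on the part of the contour where the expansion does not yet apply and in the tail coming from the remainder in \eqref{eq2_thm_GS}. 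This is where the hypotheses on $\cL$ are used, and it is carried out as in \cite[Section~1]{schrohe}.
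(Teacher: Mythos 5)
Your proposal is correct and follows essentially the route the paper itself indicates for this recalled result (and defers to \cite{Grubb+Seeley,schrohe} for): integrate the resolvent-trace expansion \eqref{eq2_thm_GS} against $e^{-t\lambda}$ over a spectral contour through the $k$-th resolvent power to get the heat expansion, then pass to \eqref{eq_power} by the Mellin transform (or directly against $\lambda^{-s}$ on a contour), with the identification of $\tilde c'_0$ coming from the corollary to Theorem \ref{thm_GS}. Note only that the exponents your computation produces, $t^{(j-n-m)/m_0}$, are the correct ones --- the displayed \eqref{eq_heat} carries a sign typo in the exponent, just as \eqref{eq_power} has a stray $s$ where the zeta variable should appear.
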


The argument of this paper is to show trace expansions of $A\eta(t\cL)$ for certain functions $\eta$. 
Although our result does not cover the case of  full expansions for
$\eta(\lambda)=e^{-\lambda}$ or $\eta(\lambda)=\lambda^{-t}$ given in Theorem \ref{thm_schrohe},
we will consider a large class of Schwartz functions.
The setting will be slightly different as we do not require $\cL$ to be invertible or $m_0\in \bN$ for instance.

\subsection{Settings and main trace expansions}
\label{subsec_statement_res}

Here we complete the first part of the main result given in the introduction (Theorem \ref{thm_intro}).
We consider two settings: a geometric one on a compact manifold, and a Euclidean one in an open subset of $\bR^n$.

\smallskip

The manifold setting is as follows:
\begin{setting}
\label{set_cL_M}
Let $M$ be a compact manifold of dimension $n\geq 2$.
We consider  an operator   $\cL \in \Psi^{m_0}_{cl}(M)$ with positive order $m_0>0$ satisfying the following hypotheses:
\begin{enumerate}
\item We fix a smooth density on $M$ for which the operator $\cL$ is   formally self-adjoint on $L^2(M)$: 
$$
\mbox{i.e.}\qquad \forall u,v\in C^\infty(M)\qquad
(\cL u,v)_{L^2(\Omega)} = (u,\cL v)_{L^2(M)}.
$$
\item\label{eq_hyp_ellipticM} 
The principal symbol  satisfies the elliptic condition,
$$
\mbox{i.e.}\qquad 
\exists c_0>0\qquad \forall (x,\xi)\in \bT^*M\qquad
\ell_{m_0}(x,\xi)\geq c_0 |\xi|^{m_0},
$$
for one and then any Riemannian structure on $M$.
\end{enumerate}
\end{setting}

Setting \ref{set_cL_M}
together with G\r arding's inequality
imply that  $\cL$ is bounded below and  admits a  unique self-adjoint extension to $L^2(M)$.
Keeping the same notation for this self-adjoint extension, 
its spectrum is discrete and included in $[-c_\cL,+\infty)$, and we have
$$
\cL\geq -c_{\cL},
$$
for some constant $c_{\cL}\in \bR$.

If $M$ is a Riemannian manifold, the most natural choice of $\cL$ is the associated Laplace operator which is a differential operator of order $m_0=2$.

\smallskip

The Euclidean setting is as follows:
\begin{setting}
\label{set_cL_Rn}
We consider an open set $\Omega$ and  an operator   $\cL \in \Psi^{m_0}_{ps}(\Omega)$ with $m_0>0$
 satisfying the following hypotheses:
\begin{enumerate}
\item\label{cond_fsa}  The operator $\cL$ is   formally self-adjoint on $C_c^\infty(\Omega)$, 
$$
\mbox{i.e.}\qquad \forall u,v\in C_c^\infty(\Omega)\qquad
(\cL u,v)_{L^2} = (u,\cL v)_{L^2}.
$$
\item\label{cond_hyp_elliptic} 
The  symbol $\ell$ of $\cL$ admits a homogeneous expansion $\ell\sim_h \sum_{j\geq 0} \ell_{m_0-j}$ and 
 its principal symbol  satisfies the following elliptic condition on any compact subset $K$ of $\Omega$
$$
\mbox{i.e.}\qquad 
\exists c=C_K>0\qquad \forall (x,\xi)\in K \times \bR^n\qquad
\ell_{m_0}(x,\xi)\geq c |\xi|^{m_0}.
$$
\end{enumerate}
\end{setting}

For the Euclidean setting, we will consider the functional calculus for $\cL$ developed in Section \ref{sec_FC}.

\smallskip

The main result of this paper is the following theorem:
\begin{theorem}
\label{thm_main}
We consider one of the following two settings:
\begin{itemize}
\item 	Let $A\in \Psi^{m}_{cl}(M)$ with $m\in \bC$, 
where $M$ is a compact manifold and $\cL\in \Psi^{m_0}_{cl}(M)$ elliptic an operator as in Setting \ref{set_cL_M}.
\item 	Let $A\in \Psi^{m}_{cl}(\Omega)$ with $m\in \bC$ 
whose symbol is compactly $x$-supported in $\Omega$, 
where $\Omega$ is an open set of $\bR^n$ and $\cL\in \Psi^{m}_{cl}(\Omega)$ an elliptic operator as in Setting \ref{set_cL_Rn}.
\end{itemize}

Let  $\eta\in C_c^\infty(0,\infty)$.
The operator 
$A\ \eta(t \cL)$ is trace-class for all $t\in \bR$
and for all $N\in \bN$, there exists a constant $C>0$  such that for all $t\in (0,1)$, we have
$$
|\tr \left(A \eta(t\cL)\right)
-\sum_{j=0}^{N-1} c_{m+n-j} t^{-\frac{m+n-j}{m_0}}|
\leq C t^{\frac{-m-n+N}{m_0}}.
$$
The constant $C$ may be taken of the form 
$C=C_1 \sup_{j=0,\ldots,N_1} \|\eta^{(j)}\|_{L^\infty}$
for some constant $C_1\geq 0$ and some integer $N_1\in \bN_0$ depending on $N$, $\supp(\eta)$ and the setting.  

The constants $c_{m+n-j} = c_{m+n-j}(A,\eta)$ depends on $\eta$ and the setting.
More precisely, they are of the form 
$$
c_{m+n-j'}(A,\eta)=
\tilde c_{m+n-j'}^{(\eta)}
\tilde c_{m+n-j'}^{(A)},
$$
where $\tilde c_{m+n-j'}^{(A)}$ is linear in $A$ and depends on  the setting
and where 
$$
\tilde c_{m+n-j'}^{(\eta)}:=
\frac 1 {m_0}\int_{u=0}^{+\infty}
\eta(u) \ u^{\frac{m-j'+n}{m_0}} \frac{du}u .
$$

If in addition $m\in \bZ_n$ then 
$$
c_0= \ \res(A) \ \int_0^{+\infty} \!\!\! \eta(u)\frac{du}u.
$$

In the $\Omega$-setting, the constants $\tilde c_{m+n-j'}^{(A)}$ depends on the poly-homogeneous expansion of $A$ and $\cL$.
\end{theorem}

Theorem \ref{thm_main} will be proved in Section \ref{subsec_pf_thm_main}.
Our proof will not show that the constants $c_{m+n-j}$ and $c'_{m+n-j}$ above are in relations 
with the constants in the expansions recalled in Section \ref{subsec_heatexp}.
The meaning of $\tr (A\eta(\cL))$ will be discussed in Section \ref{subsec_term1}.

Compared with the known trace expansion recalled in Section \ref{subsec_heatexp}, 
we obtain a result for families of functions $\eta$, instead of the two functions $\eta(\lambda)=e^{-\lambda}$ and $\eta(\lambda)=\lambda^{-z }$, although these two functions do not satisfy the hypotheses above.
The hypotheses on the functions $\eta$ are mainly on its support and boundedness
(beside regularity), thereby excluding holomorphic functions.
The hypotheses on the elliptic operator $\cL$ differ slightly from 
the ones in \cite{Grubb+Seeley,schrohe} recalled in Section \ref{subsec_heatexp} as
we do not require $\cL$ to have an integer order.
Although we do not require $\cL$ to be invertible, 
 the elliptic condition on the principal symbol together with the essentially self-adjointness implies that it is bounded below $\cL\geq -c_\cL$, and therefore $\id+c_\cL+\cL$ is invertible.

\subsection{Some corollaries}
\label{subsec_cor}
We will see that the arguments used in the proof of Theorem \ref{thm_main} lead to a similar expansion 
when the smooth function $\eta$ is not necessarily compactly supported but in the following spaces of functions:

\begin{definition}
\label{def_cMm}
 For any $m\in \bR$, we denote by $\cM^m(\bR)$  the Fr\'echet space of functions $f\in C^\infty(\bR)$
 satisfying 
 $$
 \|f\|_{\cM^m,N}=\sup_{j=0,\ldots, N}
 \sup_{\lambda\in \bR}\ \langle \lambda \rangle^{j-m}
 |f^{(j)}(\lambda)|<\infty, \qquad N=0,1,\ldots
 $$
\end{definition}
Note that $C_c^\infty(\bR)$ is dense in each $\cM^m(\bR)$, $m<0$.

Unfortunately, in the case of $\cM^m$, the expansion we will obtain with these arguments is finite, and 
the estimate for the remaining term goes to $+\infty$ as $t\to 0$, see Part (1) below;
this result will be superseded in Section \ref{subsec_TR}.
We also observe that the result in Theorem \ref{thm_main} classically implies a similar result for any $\eta\in \cM^{m_\eta}$ as long as $\eta$ is supported away from 0, see Part (2) below:

\begin{corollary}
\label{cor_thm_main}
We continue with one of the two settings of Theorem \ref{thm_main}.
Let $\eta\in \cM^{m_\eta}(\bR)$
with $\Re m+m_\eta m_0<-n$.
Then the operator 
$A\ \eta(t \cL)$  is trace-class for all $t\in \bR$.
\begin{enumerate}
\item 
If  $\Re m\geq -n$ (so in particular $m_\eta<0$),
then we have
$$
\forall t\in (0,1)\qquad 
|\tr \left(A \eta(t\cL)\right)
-\sum_{j=0}^{N-1} c_{m+n-j} t^{-\frac{m+n-j}{m_0}}|
\leq C_1 t^{m_\eta},
$$
where $N\in \bN$ denotes the smallest non-negative integer such that $N\geq m_0m_\eta+\Re m+n$.
\item 
If $\supp(\eta)\subset [1,+\infty)$
and $\int_{u=0}^{+\infty}
|\eta(u)| \ u^{\frac{m+n}{m_0}} \frac{du}u<\infty$
 then 
we have for all $N\in \bN$:
$$
\forall t\in (0,1)
\qquad
|\tr \left(A \eta(t\cL)\right)
-\sum_{j=0}^{N-1} c_{m+n-j} t^{-\frac{m+n-j}{m_0}}|
\leq C_2 t^{\frac{-m-n+N}{m_0}}.
$$
\end{enumerate}
In both parts, 
the constants $c_{m+n-j}$ are the same as in Theorem \ref{thm_main}, 
and 
the constants $C_1,C_2$ may be taken of the form $C_j=C'_j \|\eta\|_{\cM^{m_\eta},N_1}$
for some constant $C'_j\geq 0$ and some integer $N_1\in \bN$ depending on  $N$ and the setting.
\end{corollary}

Corollary \ref{cor_thm_main} Part (1) is a generalisation of \cite[Section 2.2]{Li+Stroh} and does not require any hypothesis on the support of the Fourier transform $\widehat \eta$ of $\eta$. Our proof does not use  the ideas and results of \cite{hormander2} or of \cite{duistermaat+guillemin}.
This result will be superseded by Theorem \ref{thm_TR} 
in the next section.
We have chosen to include it as it follows the same type of arguments as for Theorem \ref{thm_main} and prepare for the proof of Theorem \ref{thm_TR}. 

Note that 
Theorem \ref{thm_main} and Part(2) of Corollary \ref{cor_thm_main} imply Part (1) of Theorem \ref{thm_intro}.

\medskip

We will also see that the arguments in the proof of Corollary \ref{cor_thm_main} allow for the following generalisation of Theorem \ref{thm_main}:

\begin{corollary}
\label{cor_thm_main_taylor0}
We continue with one of the two settings of Theorem \ref{thm_main}.
\begin{enumerate}
\item 
The property in Theorem \ref{thm_main} holds 
for any  $\eta\in C_c^\infty(\bR)$
satisfying $\eta^{j}(0)=0$ for all $j=0,1,2,\ldots$.
\item 
For any $N\in \bN$ there exists an integer $N'\in \bN$ such that for all $\eta\in C_c^\infty(\bR)$
satisfying $\eta^{j}(0)=0$ for all $j=0,1,\ldots,N_1$, 
the estimate in Theorem \ref{thm_main} holds.
\end{enumerate}
 \end{corollary}

\subsection{Trace expansion and the canonical trace}
\label{subsec_TR}

In this section, we discuss trace expansions in relations with the canonical trace.
We start with recalling the definition of the canonical trace; references include the original paper  \cite{KV} by Kontsevich and Vishik, as well as \cite{Grubb+Schrohe,paycha}.

\subsubsection{Definition and known properties}
\label{subsubsec_def_TR}

 Let $\Omega$ be a bounded subset of $\bR^n$
and let $A=\Op(a)\in \Psi_{cl}^m(\Omega)$ with complex order $m\notin \bZ_n$.
The symbol of $A$ admits the poly-homogeneous expansion $a\sim_h \sum_{j\in \bN_0} a_{m-j}$.
For $x$ fixed, the function $a_{m-j}(x,\cdot)$ is  smooth  on $\bR^n\backslash\{0\}$ and  $(m-j)$-homogeneous with $m-j\notin \bZ_n$,
so \cite[Theorem 3.2.3]{hormander1} it extends uniquely into a tempered $(m-j)$-homogeneous distributions on $\bR^n$  for which we keep the same notation.
For each $x\in \Omega$, we define the tempered distributions
using the inverse Fourier transform 
$$
\kappa_{a,x}=\cF^{-1} \left\{a(x,\cdot)\right\}
\quad\mbox{and}\quad
\kappa_{a_{m-j},x}=\cF^{-1} \left\{a_{m-j}(x,\cdot)\right\}, 
\ j=0,1,2,\ldots
$$
The distribution $\kappa_{a_{m-j},x}$ is $(-n-m+j)$-homogeneous.
Then for any positive integer $N$ with $m-N<-n$ and $x\in \Omega$
the distribution $\kappa_{a,x} - \sum_{j=0}^{N}	\kappa_{a_{m-j},x}$
is a continuous function on $\bR^n$.
Furthermore, the function $(x,y)\mapsto \kappa_{a,x}(y) - \sum_{j=0}^{N}	\kappa_{a_{m-j},x}(y)$
is continuous and bounded on $\Omega\times \bR^n$.
Its restriction to $y=0$ is independent of $N>m+n$ and defines the quantity
$$
\TR_x(A) := \kappa_{a,x}(0) - \sum_{j=0}^{N}	\kappa_{a_{m-j},x}(0).
$$

If $\tau:\Omega_1\to \Omega_2$ is diffeomorphism between two  open sets $\Omega_1,\Omega_2\subset \bR^n$
and if $A\in \Psi^{\tilde m}_{cl}(\Omega_1)$, 
then
$$
|\tau'(x)|\TR_{\tau(x)} (\tau^* A) =  \TR_{x}(A).
$$ 
Hence, $\TR_x$ yields a 1-density on a compact manifold $M$
for which we keep the same notation $\TR_x$.

\begin{definition}[\cite{KV}]
The density 	$x\mapsto\TR_x(A)$ on a compact manifold $M$ or
$x\mapsto\TR_x(A) dx$ on
 a open subset $\Omega$ is called the \emph{canonical trace density} of $A$.
The corresponding integral on $M$
$$
\TR(A):=\int_M \TR_x(A)
$$
or on $\Omega$ when $\TR_x(A)$ is integrable
$$
\TR(A):=\int_\Omega \TR_x(A) dx,
$$
is called the \emph{canonical trace} of $A$.
\end{definition}

The map $A\mapsto \TR(A)$ is a linear functional  on 
$\Psi^m_{cl}(M)$ for each $m\in \bC\backslash \bZ_n$
and it coincides with the usual $L^2$-trace if $\Re m<-n$.
It is a trace type functional on 
$\cup_{m\in \backslash \bZ_n}\Psi^m_{cl}(M)$ 
in the sense that 
$$
\TR(cA+dB) = c\TR(A) +d\TR(B) 
\qquad \mbox{whenever}\quad c,d\in \bC, \
 A,B\in 
 \cup_{m\in \backslash \bZ_n}\Psi^m_{cl}(M),
  $$
 and 
 $$
 \TR(AB)=\TR(BA) 	
	 \qquad \mbox{whenever}\quad
	 AB,\ BA \in \cup_{m\in \backslash \bZ_n}\Psi^m_{cl}(M).
 $$

The canonical trace was originally defined  in \cite{KV}, and may be defined for a slightly larger set of operators \cite{grubb}.
It coincides with the coefficients $ c_0''$ 
in the expansion \eqref{eq1_thm_GS}.
Furthermore, the residue of the canonical trace of a (suitable) holomorphic family of classical pseudo-differential operators is equal to the non-commutative residue.
It can also be read off the zeta function of $A$ 
and in the $\Omega$-setting is related to the finite-part integral of the symbol of $A$ on $\Omega\times \bR^n$, see also \cite{lesch}.
We will recover this last relation below.

\subsubsection{Result}
\label{subsubsec_statement_TR}
 
Our main result regarding the canonical trace is that it appears as the 0-coefficient of $\tr (A\eta(\cL))$ for functions $\eta$ supported near 0. Because of Corollary \ref{cor_thm_main}, it suffices to consider the case of functions $\eta$ which are compactly supported.

\begin{theorem}
\label{thm_TR}
We consider one of the following two settings:
\begin{itemize}
\item 	Let $A\in \Psi^{m}_{cl}(M)$ with $m\in \bC$, 
where $M$ is a compact manifold and $\cL$ an operator as in Setting \ref{set_cL_M}.
\item 	Let $A\in \Psi^{m}_{cl}(\Omega)$ with $m\in \bC$ 
whose symbol is compactly $x$-supported in $\Omega$, 
where $\Omega$ is an open set of $\bR^n$ and $\cL$ an operator as in Setting \ref{set_cL_Rn}.
\end{itemize}
Let  $\eta\in C_c^\infty(\bR)$.
We assume $m\not \in \bZ$ and $\Re m>  -n$ and that at least one of the two following properties is satisfied: 
\begin{itemize}
\item $\eta\equiv 1$ near 0, 
\item or $m+km_0\not \in \bZ$ for all $k\in \bN$.
\end{itemize}
Then, as $t\to 0$,
 $$
 \tr (A\eta (t\cL)) = \eta(0) \ \TR(A)+ \sum_{j=0}^{N-1} c_{m+n-j} t^{\frac{-m-n+j}{m_0}}  +o(1),
 $$
for $N\in \bN$ the smallest positive integer such that 
$N> \Re m+n$.
The constants $c_{m+n-j}$ are the same as in Theorem \ref{thm_main} and Corollary \ref{cor_thm_main}
and are finite for $j=0,\ldots,N-1$.
\end{theorem}

\begin{remark}
If the order $m_0$ of $\cL$ is an integer, the condition 
$m+km_0\not \in \bZ$ for all $k\in \bN$ is automatically satisfied since $m\not \in \bZ$ is already assumed.
This implies 
 Part (2) of Theorem \ref{thm_intro}.

In the general case, 
the proof will show that we can weaken the hypothesis $m+km_0\not \in \bZ$ for all $k\in \bN$ by replacing it with $m+km_0\not \in \bZ$ for all $k=1,\ldots,N'$ with $N'$ corresponding to $N$ as in Part (2) of Corollary \ref{cor_thm_main_taylor0}.
\end{remark}

Let us consider the case of the Laplace operator $\cL=\Delta$ on a bounded open set $\Omega$.
We consider suitable functions $\eta_k\in C_c^\infty(\bR)$ 
approximating the indicator $1_{[0,1]}$ of the interval $[0,1]$, 
for instance satisfying 
 $0\leq \eta_k \leq 1$, $\eta_k\equiv 1$ on $[0, 1]$ and $\eta_k\equiv 0$ outside $[-\frac 1k, 1+\frac 1k]$.
As $\Delta$ is invariant under translation, we compute easily  for $R>1$:
$$
\tr\left( A (1_{[0,1]} -\eta_k)(R^{-2}\Delta))\right)
=\int_{\Omega\times \bR^n} a(x,\xi)
\left( 1_{[0,1]}-\eta_k\right)(R^{-2}|\xi|^2)\
dx d\xi
=
R^{\Re m +n}\cO(\frac 1k).
$$
Applying Theorem \ref{thm_TR}
to $A=\Op(a)\in \Psi^m_{cl}(\Omega)$, the Laplace operator $\cL=\Delta$ and 
each function $\eta_k$, 
we can take as $k\to +\infty$
and readily obtain as $R\to +\infty$
$$
\tr\left(A\ 1_{[0,1]}(R^{-2}\Delta)\right)=
\int_{|\xi|\leq R}\int_{\Omega}  a (x,\xi) \   dx \ d\xi
=
\TR(A)+ \sum_{j=0}^{N-1} c_{m+n-j} R^{m+n-j}  +o(1).
$$
 In other words, $\TR(A)$ coincides with the finite part of the $\xi$-integral of 
$\int_{\Omega}  a (x,\xi) \ dx $.
Theorem \ref{thm_TR}  allows for a similar description on manifolds for which the spectral description of a Laplace operator is known and 
we can pass to the limit in $\tr A\eta_k(R^{-2}\Delta)$ as $k\to \infty$ as above.
Examples include  for instance the torus (this result is already known, see \cite{Paycha+_tams})
and more generally on compact Lie groups (cf. \cite{fischer_locglob}).

\section{The functional calculus of an elliptic operator}
\label{sec_FC}

In this section, we study the functional calculus of an elliptic operator and its continuous inclusion in the H\"ormander calculus.
Although the techniques (especially the use of the Helffer-Sj\"ostrand formular) are well known to experts in this type of analysis,  
we have chosen to include the proofs for the sake of completeness.

\subsection{The Helffer-Sj\"ostrand formula}
\label{subsec_HSformula}

\subsubsection{Almost analytic extensions}

\begin{definition}
\label{def_almost_an_ext}
Let $f\in C^\infty(\bR)$.
An \emph{almost analytic extension} of $f$ is a function $\tilde f\in C^\infty(\bR^2)$ satisfying for all $x\in \bR$
$$
\tilde f(x,0)=f(x)
\qquad\mbox{and}\qquad
\partial_y^N \bar \partial \tilde f (x,y)|_{	y=0}=0
\ \mbox{for}\ N=1,2,\ldots,
$$ 	
where $\bar \partial$ denotes  the operator 
$\bar \partial = \frac 12 (\partial_x+i\partial_y)$ on $\bR^2$.
\end{definition}

The following lemma gives a practical condition for a function to admit an almost analytic extension:

\begin{lemma}
\label{lem_exist_tildef}
Let $f\in C^\infty(\bR)\cap \cS'(\bR)$.
If  $\xi^N \widehat f\in L^1(\bR)$ for every $N\in \bN_0$, then we can construct 
an almost analytic extension of $f$. 
\end{lemma}
\begin{proof}
We fix a functions $\chi\in C_c^\infty(\bR)$
such that $\chi(s)=1$ for $|s|\leq 1$ and $\chi(s)=0$ for $|s|>2$. 
We then define the function $\tilde f:\bR^2\to \bC$  with
$$
\tilde f(x,y) 
:=
\int_\bR e^{2i\pi(x+iy)\xi} \chi(y\xi) \widehat f(\xi) d\xi.
$$
One checks easily that $\tilde f$ is smooth, that $\tilde f(x,0)=f(x)$ for any $x\in \bR$ by the Fourier inverse formula and that we have
$$
\bar \partial \tilde f(x,y) 
:=
{\frac i 2}
\int_\bR e^{2i\pi(x+iy)\xi} 
 \chi'(y\xi) \xi \, \widehat  f(\xi) \, d\xi;
$$
note that the last integrant is  supported in $\{\xi\in \bR : |y|^{-1} \leq |\xi|\leq 2|y|^{-1}\}$ if $y\not=0 $ and
identically zero if $y=0$.
We observe that for all $M\in \bN_0$ and any $y\in\bR\setminus \{0\}$, we have
\begin{equation}
\label{eq_est_barpartialtildef}
|\bar \partial \tilde f(x,y) |
=
\frac 12 
|\int_\bR e^{2i\pi(x+iy)\xi} \frac{\chi'(y\xi)}{(y\xi)^M} (y\xi)^M \xi \widehat f(\xi) d\xi|
\lesssim_{\chi,M} |y|^M \int_\bR|\xi^{M+1} \widehat f(\xi) |d\xi.
\end{equation} 
From this, one sees readily that $\tilde f$ is an almost analytic extension of $f$.
\end{proof}

Lemma \ref{lem_exist_tildef} implies that, for instance, 
 a smooth function $f\in C_c^\infty(\bR)$ with compact support
 admits  an almost analytic extension.
 In fact, it is easy to construct an almost analytic extension 
   $\tilde f_1 \in C_c^\infty(\bR^2)$ with compact support  by setting
$\tilde f_1 (x,y) = \chi_1(x)\chi_2(y) \tilde f (x,y)$	
with $\tilde f$  as in the proof of lemma \ref{lem_exist_tildef}, and  $\chi_1,\chi_2\in C_c^\infty(\bR)$ supported near the support of $f$ and 0 respectively with $\chi_1=1$ on $\supp \ f$ and $\chi_2\equiv 1$ near 0.
Lemma \ref{lem_exist_tildef} and the following statement imply that we can construct analytic extensions of larger classes of smooth functions:
\begin{lemma}
\label{lem_cMm}
Let $f\in \cM^m(\bR)$ with $m<-1$ (see Definition \ref{def_cMm}).
For any $N\in \bN_0$, 
the function $\xi^N \widehat f$ is integrable and 
$$
\|\xi^N \widehat f\|_{L^1} \leq C_N \|f\|_{\cM^m,N+2}.
$$
Moreover, for any $N,M\in \bN_0$, provided that $m+M-N<0$, then  $\xi^N \widehat f^{(M)}$ is integrable and we have
$$
\|\xi^N \widehat f^{(M)}\|_{L^1} \leq C_{N,M} \|f\|_{\cM^m,N+2}.
$$	
Above the constants $C_N$ and $C_{N,M}$ are positive and  depends on $m,N$ or $m,N,M$ but not on $f$. 
\end{lemma}
 
 \begin{proof}
For any $N\in \bN_0$, 
the function $\xi^N \widehat f $ is continuous on $\bR$ and bounded by
$$
\|\xi^N \widehat f \|_{L^\infty} \leq 
(2\pi)^{-N} \|f^{(N)}\|_{L^1}
\lesssim
\|f\|_{\cM^m,N},
$$
therefore it is integrable with
$$
\|\xi^N \widehat f\|_{L^1} 
\leq 
\int_{|\xi|\leq 1} |\widehat f(\xi)|\, d\xi
+
\int_{|\xi|> 1}|\xi^N \widehat f(\xi)|\, d\xi
\lesssim \|\widehat f\|_{L^\infty} +
\|\xi^{N+2} \widehat f\|_{L^\infty} 
\lesssim
 \|f\|_{\cM^m,N+2}.
$$
We conclude the proof by 
applying what has just been proven to $g= (2i\pi)^{M-N} (x^M f)^{(N)} \in \cM^{m+M -N}$.
 \end{proof}

We can refine the construction of almost analytic extension of a function in $\cM^m(\bR)$ given by Lemmata \ref{lem_exist_tildef} and \ref{lem_cMm}:

\begin{lemma}
\label{lem_tildefcMm}
Let $f\in \cM^m(\bR)$ with $m<-1$.
 We can construct an almost analytic extension $\tilde f$ which is supported in $\{(x,y)\in \bR^2 \ : \ |y|\leq 2\}$ and satisfies
 for all integers $N,N'\in \bN_0$ 
$$
\int_{\bR^2} 
|\bar \partial \tilde f(x,y)| \frac {\langle x,y\rangle^N}{|y|^{N'}}
dx dy 
\leq C' \|f\|_{\cM^m,N'+N+5},
$$ 	
where the constant $C,C'>0$ depends on $N,N'$ and on the construction of $\tilde f$ but not on $f$.
\end{lemma}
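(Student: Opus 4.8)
\textbf{Proof plan for Lemma~\ref{lem_tildefcMm}.}
The plan is to reuse the explicit almost analytic extension $\tilde f$ constructed in the proof of Lemma~\ref{lem_exist_tildef}, namely $\tilde f(x,y)=\int_\bR e^{2i\pi(x+iy)\xi}\chi(y\xi)\widehat f(\xi)\,d\xi$ with $\chi\in C_c^\infty(\bR)$ equal to $1$ near $0$ and supported in $\{|s|\le 2\}$, and to bound the weighted integral by splitting the $(x,y)$-plane into the regions $|y|\le 1$ and $|y|\ge 1$ and, within each, exploiting the two competing estimates for $\bar\partial\tilde f$. First I would record, as in \eqref{eq_est_barpartialtildef}, that for every $M\in\bN_0$
$$
|\bar\partial\tilde f(x,y)|\lesssim_{\chi,M}|y|^M\int_\bR|\xi^{M+1}\widehat f(\xi)|\,d\xi
\lesssim_{M}|y|^M\|f\|_{\cM^m,M+3},
$$
using $f\in\cM^m(\bR)$ with $m<-1$ and the inequality $\int_\bR|\xi^N\widehat f(\xi)|\,d\xi\lesssim_N\|f\|_{\cM^m,N+2}$ quoted just before the lemma. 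Since $\chi'(y\xi)$ is supported where $|y\xi|\in[1,2]$, the integrand in $\bar\partial\tilde f$ is supported in $1/|y|\le|\xi|\le 2/|y|$, which also gives the complementary bound $|\bar\partial\tilde f(x,y)|\lesssim_{\chi,M}|y|^{-M-1}\sup_{|\xi|\le 2/|y|}|\xi^{M+1}\widehat f(\xi)|$ or, more simply, a bound by $|y|^{-K}\|f\|_{\cM^m,\cdot}$ for any $K$; the point is that near $y=0$ we have arbitrarily many powers of $|y|$ to spend, while for $|y|$ bounded away from $0$ the factor $|y|^{-N'}$ is harmless and we only need integrability in $x$.

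For the $x$-integrability I would gain decay in $x$ by integration by parts in $\xi$: each application of $\int e^{2i\pi(x+iy)\xi}g(\xi)\,d\xi$ with one more $\partial_\xi$ on $g(\xi)=\chi'(y\xi)\xi\widehat f(\xi)$ produces a factor $(x+iy)^{-1}$, and on the support $|\xi|\asymp 1/|y|$ each $\xi$-derivative costs at most a bounded factor times a power of $|y|$ together with one more derivative on $\widehat f$, i.e. two more $\cM^m$-seminorms of $f$. Doing this, say, $n+2$ times (here $n=1$ since we are on $\bR$, so in fact a couple of times suffices) makes the resulting bound integrable in $x$ against $\langle x,y\rangle^N$ once we also use the trivial factor $\langle x,y\rangle^{-2}$ to absorb the polynomial weight together with the $dx$ integration; combining the $x$-decay with the $|y|^M$ (resp. $|y|^{-M'}$) bounds and choosing $M$ large enough relative to $N'$ makes $\int_{|y|\le 1}$ converge, while on $|y|\ge 1$ the weight $|y|^{-N'}$ and the fast $\xi$-localisation make $\int_{|y|\ge 1}$ converge as well. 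Bookkeeping the number of $\cM^m$-seminorms used — at most two per integration by parts plus the $+3$ from the basic estimate — yields the stated constant $C\|f\|_{\cM^m,N'+5}$, the $+5$ being the budget left after choosing how many times to integrate by parts; if one needs a cleaner count one can integrate by parts exactly once in $\xi$ and twice in $x$-weight, which is enough in dimension one.

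The main obstacle I expect is the bookkeeping of seminorm indices: one must verify that across the regime split, the several integrations by parts in $\xi$, and the change of variables needed to see $\xi$-support localisation, the total cost stays within $N'+5$ derivatives of $f$, uniformly in $N\le N'$, and that the implied constants depend only on $N,N'$ and $\chi$ (hence on the construction) and not on $f$. This is routine but delicate, and it is the only genuinely non-formal point; the analytic estimates themselves are immediate from \eqref{eq_est_barpartialtildef} and the compact support of $\chi'$.
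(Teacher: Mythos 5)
There is a genuine gap, and it lies exactly where you flagged the argument as ``routine'': the treatment of the region $|y|\ge 1$ with the \emph{unmodified} extension of Lemma \ref{lem_exist_tildef}. For that $\tilde f$ one has $\bar\partial\tilde f(x,y)=i\int e^{2i\pi(x+iy)\xi}\chi'(y\xi)\,\xi\widehat f(\xi)\,d\xi$, supported in $|\xi|\asymp 1/|y|$; for large $|y|$ the substitution $u=y\xi$ gives $\bar\partial\tilde f(x,y)=\tfrac{i}{y^2}\int e^{2i\pi xu/y}e^{-2\pi u}\chi'(u)\,u\,\widehat f(u/y)\,du$, so in the region $|x|\le\epsilon|y|$ one gets $|\bar\partial\tilde f(x,y)|\asymp |\widehat f(0)|\,|y|^{-2}$ (the constant $\int e^{-2\pi u}\chi'(u)u\,du\neq 0$ for a standard cutoff), with \emph{no} decay in $x$ there and no better decay in $y$: your claim that one can reach $|y|^{-K}$ for any $K$ fails for $|y|\ge 1$, since $\widehat f$ need not vanish at $0$ (only the measure of the $\xi$-support and the factor $\xi$ give $|y|^{-2}$). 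Consequently, at the endpoint $N=N'$ the weight satisfies $\langle x,y\rangle^{N}/|y|^{N'}\gtrsim 1$ on $\{|x|\le\epsilon|y|\}$ and the contribution of that region is $\gtrsim |\widehat f(0)|\int_{1}^{\infty}|y|^{-1}\,dy=\infty$ whenever $\int f\neq 0$. The endpoint $N=N'$ is not a luxury: it is precisely the case used later (Lemma \ref{lem_cauchyF}, Theorem \ref{thm_sp_tildef}, and the proof of Theorem \ref{thm_f(L0)}, where the weight is $(\langle z\rangle/|\Im z|)^{k}$), so the lemma cannot be proved for the raw extension.

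The missing idea is that the construction is part of the statement and must be adjusted: the paper truncates the extension in $y$, setting $\tilde f_1(x,y)=\chi(y)\,\chi(y/\langle x\rangle)\,\tilde f(x,y)$ (in the spirit of Remark \ref{rem_aaext_Ccinfty}), which is still an almost analytic extension of $f$. Then $\bar\partial\tilde f_1$ splits into (i) terms where $\bar\partial$ hits the cutoffs, supported in $1\le|y|\le 2$, where $|y|^{-N'}\le 1$ and one only needs $x$-decay of $\tilde f$ itself, obtained by integrating by parts in $\xi$ at the cost of finitely many seminorms of $f$ (this is the paper's $I_1$); and (ii) the term $\chi(y)\chi(y/\langle x\rangle)\bar\partial\tilde f$, now confined to $|y|\le 2$, where your small-$|y|$ analysis (powers $|y|^{M}$ from \eqref{eq_est_barpartialtildef} combined with $x$-decay from integration by parts, with $M=N'+2$ and $N+2$ integrations) goes through and gives the paper's $I_2\lesssim\|f\|_{\cM^m,N'+5}$. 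So your estimates for $|y|\le 1$ are essentially the paper's, but without the cutoff in $y$ the large-$|y|$ region defeats the claimed bound at $N=N'$, and the seminorm bookkeeping you postponed cannot repair that.
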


\begin{proof}
Lemma \ref{lem_cMm} allows us to construct the analytic extension $\tilde f$ for $f$ from the proof of Lemma \ref{lem_exist_tildef}.
We generalise \eqref{eq_est_barpartialtildef} in the following way: 
\begin{align*}
|x^\alpha\bar \partial \tilde f(x,y)| 
&\lesssim 
|y|^{M} \left|\int_\bR e^{2i\pi x\xi}
\partial_\xi^{\alpha}\left\{\frac{e^{-2\pi y\xi}\chi'(y\xi)}{(y\xi)^M} 
\xi^{M+1} \widehat f(\xi)\right\} d\xi\right|
\\&\lesssim  |y|^{M} 
\sum_{0\leq j_1+j_2\leq \alpha} \|\xi^{M+1 -j_1} \widehat f^{(j_2)}\|_{L^1}
\lesssim  |y|^{M}
\|f\|_{\cM^m,M+3},
\end{align*}
for any $M\geq \alpha$ and $|y|\leq 2$
by Lemma \ref{lem_cMm}.
 This estimate applied 
 with $\alpha=0,\ldots,N+2$ and $M =\max(N',N+2)$
yields
the result for the almost analytic extension $\tilde f_1$ of $f$ given by 
$\tilde f_1(x,y)=\chi(y) 
\tilde f(x,y).$
	 \end{proof}

\subsubsection{Formulae}
We may consider  an analytic extension as a function of the complex variable $z=x+iy\in \bC$.
We denote by  $L(dz)=dxdy$ the Lebesgue measure on $\bC$ identified with $\bR^2$.
The Cauchy-Pompeiu integral formula then yields:
\begin{lemma}
\label{lem_cauchyF}
	Let $f\in \cM^m(\bR)$ with $m<-1$.
	For any almost analytic extension $\tilde f$ of $f$ 
	such that  $\int_\bC \bar \partial \tilde f(z) \ L(dz)/ |\Im z|<\infty$, 
we have 
$$
\forall \lambda\in \bR\qquad
f(\lambda)=
\frac 1 {\pi} \int_\bC \bar \partial \tilde f(z) \ (\lambda-z )^{-1}L(dz) .
$$
Furthermore, for any $j=0,1,2,\ldots$, 
we have
$$
\forall \lambda\in \bR\qquad
\frac{(-1)^j}{j!}f^{(j)}(\lambda)=
\frac 1 {\pi} \int_\bC \bar \partial \tilde f(z) \ (\lambda-z )^{-1-j}L(dz) ,
$$
as long as the almost analytic extension $\tilde f$ of $f$ 
	satisfy $\int_\bC \bar \partial \tilde f(z) \ L(dz)/ |\Im z|^{j'}<\infty$ for any $j'\in \bN$.
	Such almost analytic extensions can be constructed (see Lemma~\ref{lem_tildefcMm}).
\end{lemma}

The Helffer-Sj\"ostrand formula has become a fundamental tool in functional analysis of a self-adjoint operator $T$ (densely defined on a separable Hilbert space $\cH$) since it is easily proved that the resolvent satisfies:
\begin{equation}
\label{eq_resolvent}	
\forall z\in \bC \setminus \bR \qquad 
\|(T-z)^{-1} \|_{\sL(\cH)} \leq |\Im z|^{-1}.
\end{equation}
This together with Lemma \ref{lem_cauchyF} yield the Helffer-Sj\"ostrand formula
(references for this include \cite{bouclet,davies_bk,dimassi+Sj,helffer+Sj}):

\begin{theorem}
\label{thm_sp_tildef}
Let  $T$ be a self-adjoint operator densely defined on a separable Hilbert space $\cH$.
For any $f\in \cM^m(\bR)$ with $m<-1$,
the spectrally defined operator $f(T)\in \sL(\cH)$    coincides with  
$$
f(T) = \frac 1{\pi} \int_\bC \bar \partial \tilde f(z)\
(T-z)^{-1} L(dz).
$$
Here,  $\tilde f$ is any almost analytic extension of $f$ such that $\int_{\bC} |\bar \partial \tilde f(z)| |\Im z|^{-1} L(dz)$ is finite;
such extensions exist (see Lemma~\ref{lem_tildefcMm}).
\end{theorem}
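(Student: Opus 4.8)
The plan is to verify that the operator-valued integral on the right-hand side is well-defined and that its action coincides with the spectral functional calculus of $f(T)$, by pairing with vectors and reducing to the scalar formula of Lemma \ref{lem_cauchyF}. First I would check that the integrand $z\mapsto \bar\partial \tilde f(z)\,(T-z)^{-1}$ is a continuous $\sL(\cH)$-valued function on $\bC\backslash\bR$ (and that the set $\{y=0\}$ is handled by the vanishing of $\bar\partial\tilde f$ there to all orders) and that it is absolutely integrable with respect to $L(dz)$. The latter is where the technical estimate of Lemma \ref{lem_tildefcMm} enters: combining $\|(T-z)^{-1}\|_{\sL(\cH)}\leq |y|^{-1}$ from \eqref{eq_resolvant} with the bound $\int_{\bR^2}|\bar\partial\tilde f(x,y)|\,|y|^{-1}\langle x,y\rangle^{N}\,dxdy \lesssim \|f\|_{\cM^m,N+5}$ (the case $N'=1$, $N=0$ of Lemma \ref{lem_tildefcMm}) shows the integral converges absolutely in operator norm, so $\frac{1}{2\pi}\int_\bC \bar\partial\tilde f(z)(T-z)^{-1}L(dz)$ defines a bounded operator on $\cH$. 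This establishes the claim that the right-hand side makes sense.

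Next I would identify this operator with $f(T)$. The cleanest route is via the spectral theorem: for $u,v\in \cH$ write $\langle f(T)u,v\rangle = \int_{\spec T} f(\lambda)\,d\mu_{u,v}(\lambda)$ where $\mu_{u,v}$ is the (complex, finite) spectral measure, and similarly $\langle (T-z)^{-1}u,v\rangle = \int_{\spec T}(\lambda-z)^{-1}\,d\mu_{u,v}(\lambda)$. Pairing the operator integral against $u,v$ and using Fubini (justified by the absolute integrability just established, together with $|\mu_{u,v}|(\spec T)\leq \|u\|\,\|v\|$) gives
$$
\Big\langle \frac 1{2\pi}\int_\bC \bar\partial\tilde f(z)(T-z)^{-1}L(dz)\,u,\,v\Big\rangle
= \int_{\spec T}\Big(\frac 1{2\pi}\int_\bC \bar\partial\tilde f(z)(\lambda-z)^{-1}L(dz)\Big)d\mu_{u,v}(\lambda).
$$
By Lemma \ref{lem_cauchyF} the inner integral equals $f(\lambda)$ for every $\lambda\in\bR$, so the right-hand side is $\int_{\spec T}f(\lambda)\,d\mu_{u,v}(\lambda)=\langle f(T)u,v\rangle$. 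Since $u,v$ are arbitrary, the two bounded operators agree.

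Finally, one should note independence of the chosen almost analytic extension: if $\tilde f_1,\tilde f_2$ are two such extensions, their difference $g=\tilde f_1-\tilde f_2$ vanishes on $\bR$ together with all its $y$-derivatives, forcing $\bar\partial g$ to vanish to infinite order on $\{y=0\}$, and a direct estimate (or the pairing argument above, whose output $f(\lambda)$ does not depend on $\tilde f$) shows the corresponding integral is zero. The main obstacle is purely the integrability/Fubini bookkeeping near the real axis — ensuring the resolvent blow-up $|y|^{-1}$ is absorbed by the decay of $\bar\partial\tilde f$; once Lemma \ref{lem_tildefcMm} is invoked, the rest is routine. (For $f\in C_c^\infty(\bR)\subset\cM^m(\bR)$ one may alternatively use the compactly supported extension of Remark \ref{rem_aaext_Ccinfty}, which makes the convergence even more transparent, and then pass to general $f\in\cM^m$ by density if desired, though the direct argument already covers all of $\cM^m(\bR)$ with $m<-1$.)
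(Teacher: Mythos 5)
Your proposal is correct and follows essentially the same route the paper intends: combine the resolvent bound \eqref{eq_resolvant} with the decay estimate of Lemma \ref{lem_tildefcMm} to get absolute convergence of the operator-valued integral, then reduce to the scalar identity of Lemma \ref{lem_cauchyF} via the spectral theorem and Fubini. The paper states this only in one line, so your spectral-measure/Fubini bookkeeping (and the remark on independence of the chosen extension) is exactly the standard filling-in of that argument.
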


\subsection{The resolvant in the Euclidean setting}

In the Euclidean setting, 
we can define the formal resolvent of $\cL$ in the following way:

\begin{lemma}
\label{lem_cLOmega'}
	We consider  Setting \ref{set_cL_Rn}.
	\begin{enumerate}
	\item 
		For any  bounded open set  $\Omega'$  such that $\bar \Omega'\subset \Omega$,
the restriction of $\cL$ to an unbounded operator on $L^2(\Omega')$ densely defined on $C_c^\infty(\Omega')$  admits a unique self-adjoint extension $\cL_{\Omega'}$ to $L^2(\Omega')$. 
Furthermore,  $\cL_{\Omega'}$ is bounded below:
$$
\cL_{\Omega'}\geq -c_{\Omega'},
$$
and the operator $\cL_{\Omega'}$ coincides with the restriction of $\cL$ to $\Omega'$ in the following sense: 
\begin{itemize}
\item 
for any function $v\in L^2(\Omega')$ which we view also as a distribution $v\in \cE'(\Omega)$, 
if the distribution $\cL v \in \cE'(\Omega)$ coincides with a square integrable function on $\Omega'$, 
then $v$ is in the domain $\Dom (\cL_{\Omega'})$ of $\cL_{\Omega'}$ and 
the distribution $\cL v\in \cE'(\Omega)$ coincides with the function $\cL_{\Omega'} v \in L^2(\Omega') $ on $\Omega'$;

\item 
conversely,
for any function $v\in \Dom (\cL_{\Omega'})$, 
$\cL_{\Omega'} v = \cL v|_{\Omega'} \in L^2(\Omega')$.
\end{itemize}

\item 
The operator $\cL$ is   formally self-adjoint on $L^2(\Omega)\cap \cE'(\Omega)$ in the sense that for any $u,v\in L^2(\Omega)\cap \cE'(\Omega)$ such that $\cL u $ and $\cL v$ are both in $L^2(\Omega)$, we have
$$
(\cL u, v)_{L^2(\Omega)} = (u,\cL v)_{L^2(\Omega)}.
$$

\item 
For any $z\in \bC\setminus \bR$, 
for any $u\in C_c^\infty(\Omega)$,
there exists a unique function $v\in L_{loc}^2(\Omega)$ such that 
$u=(\cL-z)v$.
Furthermore, $v=(\cL-z)^{-1} u$ is in $L^2(\Omega)$
with $\|v\|_{L^2(\Omega)} \leq |\Im z|^{-1} \|u\|_{L^2(\Omega)}$.
 It is
smooth and satisfies for any $N\in \bN$
$$
\|(\cL-z)^{-1}u\|_{H^N_{loc}(\Omega),N}
\leq \frac{C}{|\Im z|} \|u\|_{H^{N'}_{loc}(\Omega),N'}
$$
where the integer $N'$ and the constant $C\geq 0$ are independent of $u$ or $z$ - but may depend on $N$ and Setting \ref{set_cL_Rn}.	
\item 
For any $z\in \bC\setminus \bR$, 
the operator $(\cL-z)^{-1}$ maps $C_c^\infty(\Omega)$ to $\cap_{s\in \bR} H^s_{loc}(\Omega)$ linearly and continuously. 
Furthermore, it satisfies:
$$
\forall u\in C_c^\infty(\Omega)\qquad
\|(\cL-z)^{-1} u\|_{L^2(\Omega)} \leq \frac 1{|\Im z|}\| u\|_{L^2(\Omega)} .
$$ 
	\end{enumerate}
\end{lemma}

\begin{proof}
By the G\r arding inequality, $\cL$ is bounded below on $C_c^\infty(\Omega')$.
We denote by $\cL_{\Omega'}$ the Friedrichs self-adjoint extension to $L^2(\Omega')$. 
The construction of a parametrix yields elliptic estimates which shows that the equation $\cL_{\Omega'} u = \pm iu$ for $u\in \Dom (\cL_{\Omega'})$ has only $u=0$ as solution.
Hence the self-adjoint extension to $L^2(\Omega')$ is unique.

Let us consider $v\in L^2(\Omega')$ such that $\cL v|_{\Omega'}\in L^2(\Omega')$.
Let  $(v_j)$ be a sequence in $C_c^\infty (\Omega')$ converging to $v$ in $L^2(\Omega')$.
Hence, $\cL v_j|_{\Omega'} = \cL_{\Omega'} v_j$ defines a sequence in $L^2(\Omega')$ which is bounded and weakly convergent to  $\cL v|_{\Omega'}$.
One then checks that the sequence $(v_j)$ is Cauchy for the Friederichs sesquilinear form of $\cL_{\Omega'}$.
Therefore $v\in \Dom (\cL_{\Omega'})$ and $\cL_{\Omega'} v =\cL v|_{\Omega'}$. 

Let us consider $v \in \Dom (\cL_{\Omega'})$.
There exists a sequence $(v_j)$ in $C_c^\infty (\Omega')$ converging to $v$ in $L^2(\Omega')$ 
and Cauchy for  the Friederichs sesquilinear form of $\cL_{\Omega'}$.
 Hence
$\cL v_j|_{\Omega'} = \cL_{\Omega'} v_j$ defines a (bounded) sequence converging weakly to $\cL_{\Omega'} v$ in $L^2(\Omega')$. As $(\cL v_j)$  converges to $\cL v$ in $\cE'(\Omega)$, we conclude that $\cL_{\Omega'} v = \cL v|_{\Omega'}$.
This shows Part (1). 

\smallskip

For Part (2), consider $u,v\in L^2(\Omega)\cap \cE'(\Omega)$ with $\cL u,\cL v \in L^2(\Omega)$.
We may assume that $u$ and $v$ are both supported in a bounded open set $\Omega_1$ with $\bar \Omega_1 \subset \Omega$.
Let $\Omega_2$ be a bounded open set with $\bar \Omega_2\subset \Omega$  and $\Omega_2 \supset \supp (w)$ 
for any $w\in \cE'(\Omega)$ supported in $\Omega_1$. 
Let $(u_j)$ and $(v_j)$ be sequences in $C_c^\infty(\Omega_1)$ converging to $u$ and $v$ respectively in $L^2(\Omega_2)$.
Then we have $(\cL u_j, v_j)_{L^2(\Omega)} = (u_j, \cL v_j)_{L^2(\Omega)}$, and 
 the limit as $j\to \infty$ yields the proof of
 Part (2).

\smallskip

Now let us prove Part (3).
We consider $z\in \bC \setminus \bR$ and $u\in C_c^\infty(\Omega)$.
For any bounded open set $\Omega'$ with 
$\bar \Omega'\subset \Omega$ and $\Omega'\supset \supp (u)$, 
we set
$$
L^2(\Omega') \ni v_{\Omega'} := (\cL_{\Omega'} -z)^{-1}u.
$$
By \eqref{eq_resolvent}, $\|v_{\Omega'}\|_{L^2(\Omega')} \leq |\Im z|^{-1} \|u\|_{L^2(\Omega')}$.
 If we define $v_{\Omega''}$ in a similar way on another  bounded open subset $\Omega''$ 
 with $\bar \Omega''\subset \Omega$ and $\Omega''\supset \bar \Omega'$, 
then   Part (2) implies
$$
z\|v_{\Omega'}-v_{\Omega''}\|_{L^2(\Omega')}^2
=
z(v_{\Omega'}-v_{\Omega''} , v_{\Omega'}-v_{\Omega''})_{L^2(\Omega')}
=
(\cL (v_{\Omega'}-v_{\Omega''}) , v_{\Omega'}-v_{\Omega''})_{L^2(\Omega')},
$$
and similarly for $\bar z \|v_{\Omega'}-v_{\Omega''}\|_{L^2(\Omega')}^2$, so 
\begin{align*}
2\Im z \|v_{\Omega'}-v_{\Omega''}\|_{L^2(\Omega')}^2
&= 
(\cL (v_{\Omega'}-v_{\Omega''}) , v_{\Omega'}-v_{\Omega''})_{L^2(\Omega')}
-
(v_{\Omega'}-v_{\Omega''}, \cL(v_{\Omega'}-v_{\Omega''}))_{L^2(\Omega')}	\\
&=(\cL v_{\Omega''} , v_{\Omega''}1_{\Omega'})_{L^2(\Omega)}
-
(v_{\Omega''}1_{\Omega'}, \cL v_{\Omega''})_{L^2(\Omega)} = 0;
\end{align*}
here $1_{\Omega'}$ denotes the indicatrix of the set $\Omega'$.
Therefore $v_{\Omega'}=v_{\Omega''}$ on $\Omega'$.
This allows us to define $v\in L^2_{loc}(\Omega)$ via $v=v_{\Omega'}$ 
on any bounded open set $\Omega'$ with 
$\bar \Omega'\subset \Omega$ and $\Omega'\supset \supp (u)$.
It satisfies 
$(\cL-z)v=u$ on $\Omega$ and $\|v\|_{L^2(\Omega)} \leq |\Im z|^{-1} \|u\|_{L^2(\Omega)}$.

Let us show $v\in \cap_{s\in \bN} H^s_{loc}(\Omega)$.
We will need the following observation:
for any bounded open set $\Omega_1$ such that $\bar \Omega_1\subset \Omega$, 
we can construct a parametrix of $(1+c_{\Omega_1} +\cL_{\Omega_1})^M$ and obtain the estimates
\begin{align*}
\|(1+c_{\Omega_1} +\cL_{\Omega_1})^M w \|_{L^2(\Omega_1)}
&\lesssim 
\| (1+\Delta)^{\lceil m_0/2\rceil M} w \|_{L^2(\Omega_1)},
\\
\| (1+\Delta)^{ M} w \|_{L^2(\Omega_1)}
&\lesssim 
\|(1+c_{\Omega_1} +\cL_{\Omega_1})^{\lceil 2/m_0\rceil M} w \|_{L^2(\Omega_1)},
\end{align*}
where the implicit constants do not depend on $w\in C_c^\infty(\Omega_1)$ but may depend on $M\in \bN$, and $\cL,\Omega_1$; here $\Delta$ denotes the standard 
Laplacian in $\bR^n$ and $\lceil x\rceil$ the smallest non-negative integer strictly greater than $x\in \bR$.
This implies for any $\Omega'$ as above:
\begin{align*}
&\| (1+\Delta)^{ M} v \|_{L^2(\Omega')}
=
\| (1+\Delta)^{ M} v_{\Omega'} \|_{L^2(\Omega')}
\lesssim 
\|(1+c_{\Omega'} +\cL_{\Omega'})^{\lceil 2/m_0\rceil M} v_{\Omega'}\|_{L^2(\Omega')}
\\
&\qquad \leq \frac 1 {|\Im z|}
\| (1+c_{\Omega'} +\cL_{\Omega'})^{\lceil 2/m_0\rceil M} u \|_{L^2(\Omega')}
\lesssim
 \frac 1 {|\Im z|}
 \| (1+\Delta)^{\lceil  m_0/2 \rceil  \lceil  2 /m_0\rceil M} u \|_{L^2(\Omega')}.
\end{align*}
This shows  $v\in \cap_{s\in \bN} H^s_{loc}(\Omega)$,  so $v$ is smooth and Part (3) follows.

Part (4) is a consequence of Part (3).
\end{proof}

\subsection{The functional calculus}
\label{subsec_FC}

Here we define and relate the functional calculus of an elliptic operator $\cL$ in Setting \ref{set_cL_Rn}
with the pseudo-differential calculus.
At least formally, we use the same definition as for the  Helffer-Sj\"ostrand formula (see Section \ref{subsec_HSformula}):

\begin{definition}
\label{def_f(cL)u}
We consider Setting \ref{set_cL_Rn}.
For any $f\in \cM^m(\bR)$ with $m<-1$, 
and any $u\in C_c^\infty(\Omega)$, 
we define the distribution  
$$
 f(\cL)u :=  
\frac 1{\pi} \int_\bC \bar \partial \tilde f(z)\
(\cL-z)^{-1}u \  L(dz),
$$
where $\tilde f$ is any almost analytic extension of $f$ such that $\int_{\bC} |\bar \partial \tilde f(z)| |\Im z|^{-1} L(dz)$ is finite
and where the resolvent $(\cL-z)^{-1}$ takes its meaning from Lemma \ref{lem_cLOmega'}.
\end{definition}

Let us fix  an almost analytic extension $\tilde f$ such that 
$$
C_{\tilde f}:=
\int_{\bC} |\bar \partial \tilde f(z)| |\Im z|^{-1} L(dz),
$$ is finite; by  Lemma \ref{lem_tildefcMm}, such an extension exists.
By Lemma \ref{lem_cLOmega'} Part (4), 
	the distribution $f(\cL)u$ is square integrable with the estimate 
	 $$
\|f(\cL)u\|_{L^2(\Omega)} \leq C_{\tilde f} \|u\|_{L^2(\Omega)}.
$$
Furthermore, for any $v\in C_c^\infty(\Omega)$, 
we have 
\begin{align*}
(f(\cL)u,v)_{L^2(\Omega)}
&= 
\frac 1{\pi} \int_\bC \bar \partial \tilde f(z)\
((\cL-z)^{-1}u ,v)_{L^2(\Omega)}\  L(dz)
\\&= 
\frac 1{\pi} \int_\bC \bar \partial \tilde f(z)\
((\cL_{\Omega'}-z)^{-1}u ,v)_{L^2(\Omega')}\  L(dz)
=(f(\cL_{\Omega'} u,v)_{L^2(\Omega')},
\end{align*}	
where $\Omega'$ is a bounded open set containing the supports of $u$ and $v$ and with $\bar \Omega'\subset \Omega$.
As the Helffer-Sj\"ostrand formula for $f(\cL_{\Omega'})$ does not depend on the choice of the almost analytic extension $\tilde f$ with $C_{\tilde  f}<\infty$,
so does $f(\cL)u$ in Definition  \ref{def_f(cL)u}.

\begin{definition}
\label{def_f(cL)}
Definition \ref{def_f(cL)u} yields  the continuous operator  $f(\cL):C_c^\infty(\Omega)\to L^2(\Omega)$.
\end{definition}

Since the Helffer-Sj\"ostrand formula may be used to define the spectral calculus of a self-adjoint operator (see Section \ref{subsec_HSformula}), 
the spectral calculus defined for any self-adjoint for $\bar \cL$ will coincide with our definition: 

\begin{lemma}
\label{lem_f(cL)_f(barcL)}
If $\cL$ admits a self-adjoint extension $\bar \cL$ to $L^2(\Omega)$, then $f(\bar \cL)$ coincides on $C_c^\infty(\Omega)$ with $f(\cL)$ from Definition \ref{def_f(cL)} for any $f\in \cM^m$, $m<-1$. And this is so for any such extension $\bar \cL$.
\end{lemma}

The main result of this section is the following pseudo-differential property of this functional calculus:
\begin{theorem}
\label{thm_f(L0)}
We consider  Setting \ref{set_cL_Rn} and the functional calculus from Definition \ref{def_f(cL)}.

\begin{enumerate}
\item Let $m<-1$ and  $f\in \cM^m(\bR)$.
The operator $f(\cL)$ is in $\Psi^{m m_0}_{ps}(\Omega)$
and its symbol admits an expansion $a\sim \sum_{j\in \bN_0} a_{m m_0-j}$ 
satisfying for $|\xi|\geq 1$ 
$$
a_{m m_0}(x,\xi)=f(\ell_{m_0}(x,\xi)), 
$$
and more generally for any $j=0,1,2,\ldots$
$$
a_{m m_0-j}(x,\xi)=
\sum_{k=0}^j c_{j,k}f^{(j+k)}(\ell_{m_0}).
$$
where  each $c_{j,k}\in S^{-j+(j+k)m_0}_{loc} (\Omega\times \bR^n)$ is a symbol independent of $z$, 
identically 0 for $|\xi| <1$
and
homogeneous of degree $-j+(j+k)m_0$ in $\xi$ for  $|\xi|\geq 2$.
\item 
For each $m<-1$, 
the map
	$f\mapsto f(\cL)$ is continuous
	from $\cM^m(\bR)$  to $\Psi^{mm_0}(\Omega)$.	
	Moreover, the map $f\mapsto f(\cL)-\sum_{j=0}^{N}  \Op(a_{mm_0-j})$  is continuous
	from $\cM^m(\bR)$  to $\Psi_{ps}^{mm_0-N-1}(\Omega)$ for any $N\in \bN_0$.

\item If $f\in \cS(\bR)$ then $f(\cL)\in \Psi^{-\infty}(\Omega)$ is smoothing.	
\end{enumerate}
\end{theorem}

The proof of Theorem \ref{thm_f(L0)} is given in Appendix \ref{sec_pf_thm_f(L0)}. It relies on the Helffer-Sj\"orstand formula and on the construction of a paramterix $P_z$ for $(\cL-z)$.

First let us state some corollaries.

\begin{corollary}
\label{cor_thm_f(L0)}
We consider  Setting \ref{set_cL_Rn} and the functional calculus from Definition \ref{def_f(cL)}.
\begin{enumerate}
\item If $\cL$ admits a self-adjoint extension $\bar \cL$ on $L^2(\Omega)$, then for any $f\in \cM^m(\bR)$ with $m<-1$, 
the operator $f(\bar \cL)$ defined spectrally as a bounded operator on $L^2(\Omega)$ coincides with the pseudo-differential operator $f(\cL)$ on $L^2(\Omega)$.  
\item 
If $\cL$ is bounded below on $C_c^\infty(\Omega)$,
then it admits a unique self-adjoint extension $\bar \cL$.
Furthermore, for any $f\in \cM^m(\bR)$ with $m\in \bR$, the spectrally defined operator $f(\bar \cL)$ coincide with a pseudo-differential operator in $\Psi^{mm_0}_{ps}(\Omega)$, and the properties of Theorem \ref{thm_f(L0)} holds even for $m\geq -1$.
\end{enumerate}
\end{corollary}

\begin{proof}
Part 1 follows from Theorem \ref{thm_f(L0)} and Lemma \ref{lem_f(cL)_f(barcL)}. 
	For Part 2, 
	if $m\geq -1$, 
	consider $N\in \bN$ such that $m-N<-1$ and $f_1(\lambda):=(1+c+\lambda)^{-N} f(\lambda)$ 
	where the constant $c$ is such that $\cL \geq -c$.
	Then $f(\bar \cL) = (1+c+\bar \cL)^N f_1(\bar \cL)$ and the result follows from Part 1 and Theorem \ref{thm_f(L0)}.
\end{proof}

\begin{remark}
\label{rem_thm_f(L0)}
\begin{itemize}
\item
Theorem \ref{thm_f(L0)} and Corollary \ref{cor_thm_f(L0)} extend readily to the case of an operator $\cL$ valued in a finite dimension real vector space,
we will not use this in this paper.
\item 
Part (3) in Theorem \ref{thm_f(L0)} and Lemma \ref{lem_tr_Psim<-n}
imply that the function $t\mapsto \tr (e^{it\cL})$ is the tempered distribution  given by 
$\cS(\bR)\ni \phi\mapsto  \int_\bR \tr (e^{it\cL})\ \phi (t) dt = \tr (\widehat \phi(\cL))$.
\end{itemize}
\end{remark}

Theorem \ref{thm_f(L0)} implies a similar result in the setting of compact manifolds:

\begin{corollary}
\label{cor_M_thm_f(L0)}
We consider  Setting \ref{set_cL_M}.
\begin{enumerate}
\item 
For any function $f\in \cM^m(\bR)$ with $m\in \bR$,
the spectrally defined operator $f(\cL)$ is  in $\Psi^{m m_0}(M)$.
\item 
The map
	$f\mapsto f(\cL)$ is continuous
	from $\cM^m(\bR)$  to $\Psi^{mm_0}(M)$.	
\item If $f\in \cS(\bR)$ then $f(\cL)\in \Psi^{-\infty}(M)$ is smoothing
and the map $\cS(\bR)\ni f \mapsto f(\cL) \in \Psi^{-\infty}(M)$ is continuous.
The integral kernel $K_f$ of $f(\cL)$ is a smooth function on $M \times M$, and the map $\cS(R)\ni f \mapsto K_f\in C^\infty(M\times M)$ is continuous.
	By duality, the map $\cS'(\bR)\ni f \mapsto K_f\in \cD'(M\times M)$ is continuous.
\end{enumerate}
\end{corollary}

As in Remark \ref{rem_thm_f(L0)}, 
Corollary \ref{cor_M_thm_f(L0)} extends to an operator $\cL$ on a vector bundle over $M$.
Part (3) of Corollary \ref{cor_M_thm_f(L0)} allows us to consider the integral kernel of $e^{it \cL}$.

\section{Proof of Theorems \ref{thm_main} and \ref{thm_TR}}
\label{sec_pf_thms}

Here, we prove Theorem \ref{thm_main}, its corollaries, and Theorem  \ref{thm_TR} in Sections \ref{subsec_pf_thm_main},  \ref{subsec_proof_cor}
and \ref{subsec_proof_thm_TR} respectively.
Our first task is to understand the meaning of $\tr (A\eta(\cL))$.

\subsection{Meaning of $\tr (A\eta(\cL))$ and the first term in the expansion}
\label{subsec_term1}
In the manifold case (i.e. Setting~\ref{set_cL_M}), 
 $\cL$ admits a unique self-adjoint extension, 
so the operator $\eta(\cL)\in \sL(L^2(M))$ can be  defined spectrally for any $\eta\in L^\infty(\bR)$.
Hence if  $A\in \Psi^m(M)$ with $m<-n$, then 
$A\ \eta(\cL)$ is trace-class with
$$
\tr \left|A\ \eta(\cL)\right|
\leq
\|\eta(\cL)\|_{\sL(L^2(\Omega))}
\tr |A|
\leq \|\eta\|_{L^\infty} \ \tr |A|.
$$
More generally, if  $\eta:\bR\to \bC$ is a measurable function such that 
\begin{equation}
\label{eq_eta_mes_meta}
\exists C>0\qquad 
\forall \lambda\in \bR\qquad
|\eta(\lambda)|\leq (1+\lambda)^{m_\eta},
\quad\mbox{where}\quad
m_\eta<(-n-m)/m_0,
\end{equation}
 then 
$A\ \eta(\cL)$ is trace-class with 
$\tr \left|A\ \eta(\cL)\right|
\leq C \|A\|_{\Psi^m, N}$ for some $N\in \bN$.

We can generalise this to the Euclidean setting in the following way:

\begin{lemma}
\label{lem_trAetaOmega'}
	We consider Setting \ref{set_cL_Rn}.
Let $A\in \Psi^{m}(\Omega)$ and such that its integral kernel $K_A$ is compactly supported in $\Omega\times \Omega$.
Let $\Omega'$ be a bounded open set such that $\bar \Omega'\subset \Omega$ and $\supp(K_A)\subset \Omega'\times \Omega'$.
Using the notation of Lemma \ref{lem_cLOmega'}, 
the operator $\eta(\cL_{\Omega'})$ may be defined spectrally for any measurable function $\eta:\bR\to \bC$.
\begin{itemize}
\item	
If $m<-n$ and $\eta\in L^\infty(\bR)$, we define $\tr (A\eta(\cL)) :=\tr (A\eta(\cL_{\Omega'}))$.
 \item More generally, if $m$ and $\eta$ satisfy
 \eqref{eq_eta_mes_meta}, then setting for $\epsilon_0>0$
$$
\Psi^{-n-\epsilon_0}_{cl} \ni A_1:=A (1+c_{\Omega'}+\cL_{\Omega'})^{-\frac{m +n+\epsilon_0}{m_0}} 
 \qquad\mbox{and}\qquad
 \eta_1 (\lambda) := (1+c_{\Omega'}+\lambda)^{\frac{m +n+\epsilon_0}{m_0}}\eta(\lambda),
 $$
 we define $\tr (A\eta(\cL)) :=\tr (A_1\eta_1(\cL_{\Omega'}))$.
\end{itemize}
In both cases, 
 this definition does not depend on $\Omega'$ or $\epsilon_0$,
 and coincides with $\tr (A\eta(\cL))$ when 
$\cL$ admits a unique self-adjoint 
extension,
or using Section \ref{sec_FC} when
 $\eta\in \cM^{m_\eta}$ with $m_\eta<-1$.
 Furthermore, we have
$\tr \left|A\ \eta(\cL)\right|
\leq C \|A\|_{\Psi^m, N}$ for some $N\in \bN$.
\end{lemma}

With this understanding for $\tr (A\eta(\cL))$, we can now start its analysis.
  
\begin{proposition}
\label{prop_term1}
We consider Setting \ref{set_cL_Rn}.
Let $A\in \Psi^{m}(\Omega)$ with integral kernel compactly supported in $\Omega\times \Omega$.
Let $\eta\in L^\infty(0,\infty)$ be compactly supported in $(0,\infty)$.
Then 
$$
|\tr \left(A\ \eta(t \cL)\right)|
\leq 
\left\{\begin{array}{l}
C_1 t^N\ \mbox{if} \ t\geq 1 \ \mbox{for some} \ N\in \bN,\\
C_2 t^{(m'-m)/m_0}\ \mbox{if} \ t\in (0,1), \ \mbox{for any} \ m'<-n,\\
\end{array}
 \right.
$$
and the constants $C_1,C_2$ may be chosen of the form $C_j=D_j  \sup_{(0,\infty)}|\eta|$ for some constant 
 $D_j$ depending on Setting \ref{set_cL_Rn}, $A$ 
 and  $\supp (\eta)$ and for $j=2$ on $m'$.

 If furthermore, $A\in \Psi^{m}_{cl}(\Omega)$ with $m=-n$, then 
$$
\lim_{t\to 0^+}
\tr \left(A\ \eta(t \cL)\right)
=\frac {\res (A)}{m_0}
\int_0^{+\infty}\!\!\!
\eta(u)    \frac{du}u .
$$
\end{proposition}

Routine arguments of localisation gives:
\begin{corollary}
\label{corM_prop_term1}
The same properties as in Proposition \ref{prop_term1}
hold in Setting \ref{set_cL_M} for any operator $A\in \Psi^m(M)$ and $\eta\in L^\infty(\bR)$ compactly supported in $(0,\infty)$.
\end{corollary}

\begin{proof}[Proof of Proposition \ref{prop_term1}]
We first prove the statement for $\eta\in C_c^\infty(0,\infty)$.
We continue with the notation of Lemma \ref{lem_trAetaOmega'}.
The properties of the trace and of pseudo-differential calculus then give the estimates:
\begin{align*}
&\left|\tr (A\ \eta(t \cL_{\Omega'}))\right|
\leq
\| A (\id+c_{\Omega'}+\cL_{\Omega'})^{-\frac {m}{m_0} }\|_{\sL(L^2(\Omega'))}
\tr |(\id+c_{\Omega'}+\cL_{\Omega'})^{\frac {m}{m_0} } \eta(t \cL_{\Omega'}) |
\\
&\qquad\lesssim 
\|(\id+c_{\Omega'}+\cL_{\Omega'})^{\frac {m}{m_0} } \eta(t \cL_{\Omega'}) \|_{\Psi^{m'}(\Omega'),N_1}
\lesssim 
\|\eta(t \cL_{\Omega'}) \|_{\Psi^{m'-m}(\Omega'),N_2}
\lesssim 
\|\eta(t \, \cdot)\|_{\cM^{(m'-m)/m_0},N_3},	
\end{align*}
for any $m'<-n$ by 
Theorem \ref{thm_f(L0)} (or rather Corollary \ref{cor_thm_f(L0)})
and Lemma \ref{lem_tr_Psim<-n}.
The $\cM$-semi-norm was defined in Definition \ref{def_cMm}, and we conclude with 
$$
\|\eta(t \, \cdot)\|_{\cM^{m_1},N_1}
\lesssim 
\left\{\begin{array}{l}
t^{N_1} \ \mbox{if} \ t\geq 1,\\
t^{m_1} \ \mbox{if} \ t\in (0,1).	
\end{array}\right.
$$
for any $m_1\in \bR$ and $N_1\in \bN$.
This  yields the estimate.

Let us now assume $A\in \Psi^{m}_{cl}$ with $m=-n$ and $\eta\in C_c^\infty(0,\infty)$. Then Theorem \ref{thm_f(L0)} and the  properties of pseudo-differential  operators  imply
$$
\lim_{t\to 0}
\tr \left(A\ \eta(t \cL)\right)
=
\lim_{t\to 0}
\tr \left(\Op\big(a_{m} \ \psi\ \eta(t \ell_{m_0}) \big) \right);
$$
here $a_m\in C^\infty(\Omega\times \bR^n\setminus\{0\})$ is homogeneous of degree $m=-n$ in $\xi$ and $x$-compactly supported in $\Omega'$,
and
 $\psi (\xi) =\psi_1(|\xi|)$ with $\psi_1\in C^\infty(\bR)$  
satisfying $0\leq \psi_1\leq 1$, 
$\psi(s)=0$ for $s\leq 1/2$ and $\psi(s)=1$ for $s\geq 1$.
By  Lemma \ref{lem_tr_Psim<-n}, we have
\begin{align*}
&\tr \left(\Op\left(a_{m}\ \psi \ \eta(t \ell_{m_0}) \right)\right)
=
\int_{\bR^n}\int_{\Omega'} a_{m}(x,\xi)\psi(\xi) \eta(t\ell_{m_0} (x,\xi))  dx d\xi
\\
&\qquad=
\int_{r=0}^{+\infty} \int_{(x,\xi)\in \Omega'\times \bS^{n-1}} 
a_{m}(x,r\xi)\ \psi_1 (r)\ \eta(t\ell_{m_0} (x,r\xi))  \ dx d\varsigma(\xi) \  r^{n-1}dr,
\end{align*}
after a change of variables in polar coordinates. 
We decompose the last integral as 
$\int_{r=0}^\infty = \int_{r=0}^1 + \int_{r=1}^{+\infty}$.
For the first integral, we have:
$$
|\int_{r=0}^1|\lesssim
\sup_{\substack{x\in \Omega' \\ |\xi|= 1}} |a_{m}(x,\xi)| 
\sup_{\substack{x\in \bar \Omega' \\ |\xi|= 1, 0<u\leq 1}}  |\eta(tu\ell_{m_0}(x,\xi))|,
$$
and the last supremum is zero
for $t$ positive but small enough.
For the second integral, using the homogeneity of the symbols
and $m=-n$,
we have
\begin{align*}
\int_{r=1}^{+\infty}
&=\int_{r=1}^{+\infty} \int_{(x,\xi)\in \Omega'\times \bS^{n-1}} 
a_{-n}(x,\xi) \eta(t r^{m_0} \ell_{m_0} (x,\xi))  \ dx d\varsigma(\xi) \  
 \frac{dr}r 
\\
&=
\frac 1{m_0}
\int_{(x,\xi)\in \Omega'\times \bS^{n-1}} 
 a_{-n}(x,\xi) 
  \int_{u=t \ell_{m_0}(x,\xi)}^{+\infty}
 \!\!\!
 \eta(u)  
\frac{du}u \ \ dx d\varsigma(\xi),
\end{align*}
after the change of variable $u=t r^{m_0} \ell_{m_0} (x,\xi)$.
For $t$ small enough, the second integral is in fact over $\int_{u=0}^\infty $, 
and the result follows.

If $\eta$ is not necessarily smooth but only in $L^\infty(0,\infty)$, 
then we construct a  sequence of smooth functions $\eta_k = \eta*\phi_{1/k}$ where $\phi\in C_c^\infty (\bR)$ is supported in $(-1,1)$ and $\int_\bR \phi (\lambda )d\lambda=1$.
The properties of the trace written as a sum over the eigenfunctions of $\cL_{\Omega'}$ implies easily the case of $\eta$ as $k\to +\infty$.

This  concludes the proof of Proposition \ref{prop_term1}.
 \end{proof}

\subsection{Proof of Theorem \ref{thm_main}}
\label{subsec_pf_thm_main}

By routing arguments of localisation, it suffices to prove the case of an open set $\Omega$ of $\bR^n$.
We continue with the notation of Lemma \ref{lem_trAetaOmega'}.
Since the part of the spectrum of $\cL_{\Omega'}$ involved in the expansion correspond to high frequencies, 
Theorem \ref{thm_f(L0)} implies that we may assume 
 $\cL_{\Omega'}\geq \id$ and then changing $\eta(\lambda)$ for $\eta(\lambda^{1/m_0})$ we may assume $m_0=1$; note that the proof can be carried out without these two assumptions but with cumbersome notation.
  By Proposition \ref{prop_term1} and its proof,
the operator  $A\ \eta(t \cL)$ is  trace-class for all $t\in \bR$;
furthermore, it suffices to show the case of an operator with symbol of the form  $a(x,\xi)= a_m (x,\xi) \psi(\xi)$
where $a_m\in C^\infty(\Omega\times (\bR^n\backslash\{0\}))$ is $m$-homogeneous in $\xi$, 
and compactly $x$-supported in $\Omega$  and the function $\psi\in C^\infty(\bR^n)$ 
is given by  $\psi(\xi)=\psi_1(|\xi|)$
with  $\psi_1(s)=1$ for $s\geq 1$ and $\psi_1(s)=0$ for $s\leq 1/2$.

By Theorem \ref{thm_f(L0)}, 
the symbol $b^{(t)}$ of $\eta(t\cL) $ 
admits an expansion
$b^{(t)}\sim \sum_j b^{(t)}_{-j}$
with 
$$
b^{(t)}_{-j}=
\sum_{k=0}^j c_{j,k}
\eta^{(j+k)}(t\ell_1),
$$	
and for any $N\in \bN$, $m'\in \bR$ and $M\in \bN_0$
$$
\|b^{(t)}-\sum_{j=0}^{N-1} b^{(t)}_{-j}\|_{S^{m'-N}_{loc},M}
\lesssim_{N,M} \| \eta(t\ \cdot )\|_{\cM^{m'}, M'}
$$
for some $M'$ depending on $N,M,m'$.
We estimate easily for any $m'\in \bR$,  $t\in (0,1)$ and $j\in \bN_0$
$$
\| \eta(t\ \cdot )\|_{\cM^{m'},M'} \lesssim_{\eta,m',M'}t^{m'}
\quad\mbox{and}\qquad
\|b^{(t)}_{-j}\|_{S^{m'}_{loc},M'}
 \lesssim_{j,\cL,\eta,m',M'}t^{j+m'}.
$$
 
Let $N\in \bN$ such that $\Re m-N<-n$.
We fix $m'<-n$ as close as we want to $-n$.
The properties of the pseudo-differential calculus and of the trace (see 
Lemma \ref{lem_tr_Psim<-n}) together with the estimates above
 imply for any $t >0$
$$
\tr( A \eta(t\cL) )
=\sum_{j+|\alpha|<N}
\frac {(2i\pi)^{-|\alpha|}}{\alpha !}
\tr \left(\Op(\partial_\xi^\alpha a \ \partial_x^\alpha b^{(t)}_{-j})\right)
+\cO(t^{m'-\Re m+N}).
$$
We compute easily
\begin{align*}
\tr \left(\Op(\partial_\xi^\alpha a \ \partial_x^\alpha b^{(t)}_{-j})\right)
&=
\int_{\Omega\times \bR^n}
\partial_\xi^\alpha a \ \partial_x^\alpha b^{(t)}_{-j}
\
dx d\xi
\\&=
\sum_{k=0}^j 
t^{j+k}
\int_{\Omega\times \bR^n}
\partial_\xi^\alpha a 
\ \partial_x^\alpha \left(
c_{j,k}
\eta^{(j+k)}(t\ell_1)\right) \
dx d\xi,
\end{align*}
We may write $\partial_x^\alpha \left(
c_{j,k}
\eta^{(j+k)}(t\ell_1)\right)$ as  a linear combination 
of 
$f_{k+p } \, t^p
\eta^{(j+k+p)}(t\ell_1)$
over $p=0,\ldots, |\alpha|$  
where each function 
$(x,\xi)\mapsto f_{k+p }(x,\xi)$ is in $C^\infty(\Omega\times (\bR^n\backslash\{0\}))$ 
and $(k+p)$-homogeneous in $\xi$;
in particular, $f_0$ is a constant.
Hence, $\tr \left(\Op(\partial_\xi^\alpha a \ \partial_x^\alpha b^{(t)}_{-j})\right)$ is a linear combination of 
 \begin{equation}
\label{eq_termjkpa}	
t^{j+k+p}	\int_{\Omega\times \bR^n}\!\!\!\!
\partial_\xi^\alpha a \ 
f_{k+p} \ 
\eta^{(j+k+p)}(t\ell_1) \ dx d\xi,
\qquad
0\leq k\leq j,\ 0\leq p \leq |\alpha|.
 \end{equation}
Proceeding  as in the proof of Proposition \ref{prop_term1}
and 
setting $m_{\alpha,j,k,p}:=m-|\alpha|+k+p $, 
\eqref{eq_termjkpa} is equal  for $t$ small enough to
$t^{j+k+p-(m_{\alpha,j,k,p}+n)}c_{m,|\alpha|,j,k,p}$
with 
$c_{m,|\alpha|,j,k,p} = c_{m,|\alpha|,j,k,p}(\eta) \times 
 c_{m,|\alpha|,j,k,p}(a)$
where
 \begin{align*}
 	c_{m,|\alpha|,j,k,p}(\eta)
 	&:=
 	\int_{u=0}^{+\infty}
 	\eta^{(j+k+p)}(u) \ u^{m_{\alpha,j,k,p}+n} \frac{du}u,
\\
c_{m,|\alpha|,j,k,p}(a)
&:=
\int_{\Omega \times \bS^{n-1}}
\partial^\alpha_\xi a_m \
f_{k+p} \ell_1^{-(m_{\alpha,j,k,p}+n)}
dx d\varsigma(\xi).
 \end{align*}
This shows that $\tr \left(\Op(\partial_\xi^\alpha a \ \partial_x^\alpha b^{(t)}_{-j})\right)$ is a multiple of 
$t^{j+|\alpha| -m  -n}$, 
and the expansion follows.

\medskip

The constant term in the expansion corresponds to the terms 
in \eqref{eq_termjkpa}	
with 
$j+k+p-(m_{\alpha,j,k,p}+n)=0$.
Integrations by parts show  in this case $c_{m,|\alpha|,j,k,p}(\eta)=0$
 vanishes unless $j+k+p=0$.
Hence, the constant term corresponds to the terms in \eqref{eq_termjkpa}	
 with $j=k=p=0=m_{\alpha,j,k,p}+n= m-|\alpha|+n$, 
but then in this case $c_{m,|\alpha|,j,k,p}(a)=0$ unless $\alpha=0$
by Lemma \ref{lem_intS=0}.
Therefore, the constant term corresponds to the terms in \eqref{eq_termjkpa}	 with 
$0=j=k=p=|\alpha|=m-n$. In other words, the constant term can only appear as the first term in the expansion of $\Op(a_{-n}\psi)$ which is given by  Proposition \ref{prop_term1} Part (3).

\smallskip

We observe that 
the constant $c_{m,|\alpha|,j,k,p}(\eta)$
is a multiple of $
\int_{u=0}^{+\infty}
\eta(u) \ u^{m-|\alpha|-j+n} \frac{du}u$ 
having integrated by parts repeatedly.
Hence, 
the constant $c_{m+n-j'}$ in the expansion is of the form 
$c_{m+n-j'}^{(a,\eta)}=
\tilde c_{m+n-j'}^{(\eta)}
\tilde c_{m+n-j'}^{(a)}$
where $\tilde c_{m+n-j'}^{(a)}$ is (universal) linear combinations over $|\alpha|+j=j'$
of the constants $c_{m,|\alpha|,j,k,p}(a)$ above and where
$$
\tilde c_{m+n-j'}^{(\eta)}:=
\int_{u=0}^{+\infty}
\eta(u) \ u^{m-j'+n} \frac{du}u .
$$

This concludes the proof of Theorem \ref{thm_main}.

\subsection{Proof  of Corollaries  \ref{cor_thm_main} and \ref{cor_thm_main_taylor0}}
\label{subsec_proof_cor}

By routing arguments of localisation, it suffices to prove the case of an open set $\Omega$ of $\bR^n$.

\begin{proof}[Proof of Corollary \ref{cor_thm_main} Part (1)]
Although the proof follows the same type of arguments given in Section \ref{subsec_pf_thm_main}, 
here, we  assume neither $m_0=1$ nor $\cL\geq \id$.
However,  it still suffices to consider 
 symbols $a$ of the form  $a(x,\xi)= a_m (x,\xi) \psi(\xi)$
where $a_m\in C^\infty(\Omega\times (\bR^n\backslash\{0\}))$ is $m$-homogeneous in $\xi$
and compactly $x$-supported, and the function $\psi\in C^\infty(\bR^n)$ 
is given by  $\psi(\xi)=\psi_1(|\xi|)$
with  $\psi_1(s)=1$ for $s\geq 1$ and $\psi_1(s)=0$ for $s\leq 1/2$.

By Theorem \ref{thm_f(L0)} and Corollary \ref{cor_thm_f(L0)} (see also Lemma \ref{lem_trAetaOmega'}), 
the symbol $b^{(t)}$ of $\eta(t\cL)$ is in $S^{m_\eta m_0}_{loc}$
and admits an expansion
$b^{(t)}\sim \sum_j b^{(t)}_{-j}$
with $b_0^{(t)}=\eta(t\ell_{m_0})$ and more generally
$$
b^{(t)}_{-j}=
\sum_{k=0}^j c_{j,k}
t^{j+k}\eta^{(j+k)}(t\ell_{m_0}),
\qquad j=0,1,2,\ldots
$$	
and for any $m'\in \bR$ and $M'\in \bN_0$
\begin{equation}
\label{eq_bt-sum}	
\|b^{(t)}-\sum_{j=0}^{N-1} b^{(t)}_{-j}\|_{S^{m_0m_\eta-N}_{loc},M'}
\lesssim_{N,M,\cL} \| \eta(t\ \cdot )\|_{\cM^{m_\eta}, M''},
\end{equation}
for some $M''$ depending on $N,M',m'$.
We estimate easily for any $M'\in \bN_0$ 
\begin{equation}
\label{eq_b-jt}
\|b_{-j}^{(t)}\|_{S^{m_\eta m_0 -j}_{loc},M'} \lesssim_{m',M',\cL,j} 
\|\eta(t\ \cdot)\|_{\cM^{m_\eta}, M''},
\qquad j=0,1,2,\ldots
\end{equation}
for some $M''\in \bN$, and
\begin{equation}
\label{eq_chit_cM}	
\|\eta(t\ \cdot)\|_{\cM^{m_\eta}, M'} \lesssim_{m_\eta,M'}t^{m_\eta}
\|\eta\|_{\cM^{m_\eta}, M'}.
\end{equation}

The properties of the pseudo-differential calculus and of the trace (see 
Lemma \ref{lem_tr_Psim<-n}) together with \eqref{eq_bt-sum}, \eqref{eq_b-jt}
and \eqref{eq_chit_cM}
 imply for any $t \in (0,1)$
$$
\tr( A \eta(t\cL) )
=
\sum_{j=0}^{N-1}
\tr \left(A  \Op( b_{-j}^{(t)})\right)
+\cO(t^{m_\eta})
$$
and for $j=0,1,\ldots,N-1,$
$$
\tr \left(A  \Op( b_{-j}^{(t)})\right)
=
\sum_{|\alpha|<N-j}
\frac {(2i\pi)^{-|\alpha|}}{\alpha !}
\tr \left(\Op(\partial_\xi^\alpha a \ \partial_x^\alpha b^{(t)}_{-j})\right)
+\cO(t^{m_\eta}).
$$
Proceeding as in Section \ref{subsec_pf_thm_main}, 
$\tr \left(\Op(\partial_\xi^\alpha a \ \partial_x^\alpha b^{(t)}_{-j})\right)$ is a (universal) linear combination of 
$$
I_t(\partial_\xi^\alpha a,\eta,j,k,p),
\quad \mbox{over}\ 0\leq k\leq j,\ 0\leq p \leq |\alpha|,
$$
where
\begin{align*}
&I_t(\tilde a ,\eta,j,k,p):=t^{j+k+p}
\int_{\Omega\times \bR^n}\!\!\!\!
\tilde  a \ 
f_{-j +(j+k+p)m_0} \ 
\eta^{(j+k+p)}(t\ell_{m_0}) \ dx d\xi
\\&\quad =t^{j+k+p}
\int_{\Omega\times (0,+\infty)\times \bS^{n-1}}
\!\!\!\!\!\!\!\!\!\!\!\!\!\!\!\!\!\!\!\!\!\!\!\!
(\tilde a \, f_{-j +(j+k+p)m_0}) \left(x,(\frac u{t\ell_{m_0}})^{\frac 1{m_0}} \xi\right)
\eta^{(j+k+p)}(u) 
\left(\frac u{t\ell_{m_0}}\right)^{\frac n {m_0}}
 dx  \frac{du}{m_0 u}  d\varsigma,
\end{align*}
having performed a change in polar coordinates $(r,\xi)\in (0,+\infty)\times \bS^{n-1}$ and another change of variables $u=t r^{m_0} \ell_{m_0}(x,\xi)$.
An easy calculation yields:
$$
I_t(\partial_\xi^\alpha a_m,\eta,j,k,p)=
t^{\frac {m-|\alpha| -j +n}{-m_0}}
\tilde c_{m-|\alpha|-j} ^{(\eta)} \tilde c_{|\alpha|,j,k,p} ^{(m,a)},
$$
where
$$
\tilde c_{|\alpha|,j,k,p} ^{(m,a)}
:=
\int_{\Omega\times  \bS^{n-1}}
\partial_\xi^\alpha a_m \, f_{-j +(j+k+p)m_0}
\ell_{m_0}^{\frac {m-|\alpha| -j -n}{-m_0}-(j+k+p)} dx d\varsigma.
$$
%
%
%
As long as $\frac{n+m-|\alpha| -j}{m_0}+j+k+p>0$,
we also obtain the estimates 
$$
|I_t(\partial_\xi^\alpha (a- a_m) ,\eta,j,k,p)|\leq C  
t^{j+k+p}
\quad \mbox{with}\quad C \lesssim_A 
\sup\{|\eta^{j+k+p}(\lambda)| \ : \ |\lambda| \leq  2^{m_0}\max_{\bar \Omega' \times \bS^{n-1}} \ell_0\}. 
$$

Combining the equalities and estimates above,
 $\tr \left(\Op(\partial_\xi^\alpha a \ \partial_x^\alpha b^{(t)}_{-j})\right)$ is a (universal) linear combination of 
$t^{\frac {m-|\alpha| -j +n}{-m_0}}
\tilde c_{m-|\alpha|-j} ^{(\eta)} \tilde c_{|\alpha|,j,k,p} ^{(m,a)},
$ over $0\leq k\leq j$, $0\leq p \leq |\alpha|$
modulo an error term $\cO(t^j)$.
Hence, 
$$
\tr( A \eta(t\cL) )
=
\sum_{j=0}^{N-1}
\sum_{|\alpha|<N-j}
t^{-\frac {m-|\alpha| -j -n}{m_0}} 
\tilde c_{m-|\alpha|-j} ^{(\eta)} 
{c'}_{m-|\alpha|-j} ^{(a)} 
+\cO(t^{m_\eta}),
$$
for some constants ${c'}_{m-|\alpha|-j} ^{(a)}$.
However,
the uniqueness of the asymptotic and the case of $\eta$ compactly supported in $(0,+\infty)$ in Theorem \ref{thm_main} yield
${c'}_{m-|\alpha|-j} ^{(a)} = \tilde c_{m-|\alpha|-j} ^{(a)} $.
\end{proof}

\begin{proof}[Proof of Corollary \ref{cor_thm_main} Part (2)]
We consider a dyadic decomposition of $(0,+\infty)$, 
that is, $\theta_0\in C_c^\infty(\bR)$ supported in $[1/2,2]$ such that 
$$
\forall \lambda>0\qquad \sum_{k\in \bZ}\theta_k(\lambda) =1, 
\quad\mbox{where}\quad
\theta_k(\lambda):= \theta_0(2^{-k}\lambda).
$$ 
Let $\eta\in \cM^{m_\eta}(\bR)$.
We have 
$$
\forall \lambda>0\qquad 
\eta(\lambda) = \sum_{k\in \bZ} \tilde \eta_k (\lambda),
\quad\mbox{where}\quad
\tilde \eta_k(\lambda):= \eta_k(2^{-k}\lambda), \quad
\eta_k (\lambda) :=\eta(2^{k} \lambda)\theta_0(\lambda).
$$
Note that for every $N_1\in \bN_0$
$$
\forall k\in \bN_0\qquad 
\sup_{j=0,\ldots,N_1} \|\eta_k^{(j)}\|_{L^\infty}
\leq C_{N_1} 2^{km_\eta}
\|\eta\|_{\cM^{m_\eta},N_1},
$$
with a constant $C_{N_1}$ independent of $k\in \bN_0$ or $\eta\in \cM^{m_\eta}$.
Since $\supp (\eta_k)\subset \supp(\theta_0)\subset [1/2,2]$ for any $k\in \bZ$, 
the application of Theorem \ref{thm_main} to each $\eta_k \in C_c^\infty(\bR)$ gives
for any $t\in (0,1)$
$$
|\tr \left(A \eta_k(t\cL)\right)
-\sum_{j=0}^{N-1} c_{m+n-j}(A,\eta_k) t^{-\frac{m+n-j}{m_0}}|
\leq C' \sup_{0\leq j \leq N_1} \|\eta_k^{(j)}\|_{L^\infty}  t^{\frac{-m-n+N}{m_0}}, 
$$
where the constant $C$ and the integer $N_1\in \bN_0$ depend $N$ and the setting.
Furthermore, 
$$
c_{m+n-j}(A,\eta_k)=
\tilde c_{m+n-j}^{(\eta_k)}
\tilde c_{m+n-j}^{(A)}
= 
2^{-k \frac{m-j+n}{m_0} } \tilde c_{m+n-j}^{(\tilde \eta_k)}\tilde c_{m+n-j}^{(A)}
=
2^{-k \frac{m-j+n}{m_0} }  c_{m+n-j}(A,\tilde\eta_k),
$$
so for any $j\in \bN_0$
$$
\sum_{k=0}^{+\infty}
2^{k \frac{m-j+n}{m_0} } c_{m+n-j}(A,\eta_k)
=
c_{m+n-j}(A,\eta),
$$
when  $\supp(\eta)\subset [1,+\infty)$
and $\int_{u=0}^{+\infty}
|\eta(u)| \ u^{\frac{m+n}{m_0}} \frac{du}u<\infty$.
Under these hypotheses, 
we may  apply the  estimate above to $2^{-k}t$ for every $k\in \bN_0$. Summing up over $k\in \bN_0$, we obtain for all $t\in (0,1)$
$$
|\tr \left(A \eta(t\cL)\right)
-\sum_{j=0}^{N-1} c_{m+n-j}(A,\eta) t^{-\frac{m+n-j}{m_0}}|
\leq C' C_{N_1} \|\eta_k\|_{\cM^{m_\eta},N_1}  
\sum_{k\in \bN_0} 2^{km_\eta} (2^{-k} t)^{\frac{-m-n+N}{m_0}}.
$$
The result follows when $m_\eta  +(m+n-N)/m_0<0$, 
and therefore for all $N\in \bN$.
\end{proof}

This concludes the proof of Corollary \ref{cor_thm_main}.
We observe that its argument can be pushed further to obtain the property in  Corollary \ref{cor_thm_main_taylor0}.

\begin{proof}[Proof of Corollary \ref{cor_thm_main_taylor0}]
By Theorem \ref{thm_main} and Lemma \ref{lem_cLOmega'}, we may assume that $\supp (\eta)\subset [c,1]$ for the constant $c=\min(0,c_{\Omega'})$.
For such a function $\eta$, we have
$$
\|\eta (t\, \cdot)\|_{\cM^{m'}	,N}
\leq C \max_{j=0,\ldots,N_1}\| \eta^{(j)}\|_{L^\infty} t^{m'}, 
$$
for some constant $C>0$ and integer $N_1$ depending on $m'$ and $N$ but not on $\eta$.
Following the arguments of Theorem \ref{thm_main} and Corollary \ref{cor_thm_main} gives the desired property.
\end{proof}

\subsection{Proof of Theorem \ref{thm_TR}}
\label{subsec_proof_thm_TR}
This section is devoted to the proof of Theorem \ref{thm_TR}. 
Routine arguments of localisation imply that it suffices to prove the case of an open set $\Omega$.
Let $A\in \Psi^{m}_{cl}(\Omega)$ with $m\in \bC$,
$m\not \in \bZ$ and $m_1:=\Re m>  -n$, 
and with a symbol compactly $x$-supported in an open set $\Omega'$ which is relatively compact in $\Omega$.
Let $N\in \bN$ denote the largest integer such that
$N> \Re m+n$.
Let $\eta\in C_c^\infty(\bR)$.
We understand $\tr (A\eta(\cL))$ as in  Lemma \ref{lem_trAetaOmega'}
and 
we may assume $\supp(\eta)\subset [-c,1]$
because of Theorem \ref{thm_main} with $c=\min (0,c_{\Omega'})$.
It suffices to consider 
 symbols $a$ of the form  $a(x,\xi)= a_m (x,\xi) \psi(\xi)$
where $a_m\in C^\infty(\Omega\times (\bR^n\backslash\{0\}))$ is $m$-homogeneous in $\xi$
and compactly $x$-supported in  $\Omega'$, and the function $\psi\in C^\infty(\bR^n)$ 
is given by  $\psi(\xi)=\psi_1(|\xi|)$
with  $\psi_1(s)=1$ for $s\geq 1$ and $\psi_1(s)=0$ for $s\leq 1/2$.

\subsubsection{Case of $\eta\equiv 1$ near 0.}
In this case, we modify the arguments given in the proof of Corollary \ref{cor_thm_main}.
Here, the semi-norms in $\eta$ would not provide any decay in $t$. However, since $\eta \equiv 1 $ near 0, we have for all $m_1\geq 0$ and $N\in \bN$:
$$
\forall t\in (0,1)\qquad
\|(\eta-1)(t \, \cdot)\|_{\cM^{m_1},N}
\lesssim_{N,m_1,\eta} 
t^{m_1} 
$$

By  Theorem \ref{thm_f(L0)} applied to $\eta-1$,
the symbol $b^{(t)}$ of $\eta(t\cL)$ is smoothing
and admits an expansion
$b^{(t)}\sim \sum_j b^{(t)}_{-j}$
with $b_0^{(t)}=\eta(t\ell_{m_0})$ and more generally
$$
b^{(t)}_{-j}=
\sum_{k=0}^j c_{j,k}
t^{j+k}\eta^{(j+k)}(t\ell_{m_0}),
\qquad j=0,1,2,\ldots.
$$	
 Applying  Theorem \ref{thm_f(L0)} to $\eta-1$ implies that,  for any $m'\geq 0$ and $N,M'\in \bN$, we have for all $t\in (0,1)$:
$$
\|b^{(t)}-\sum_{j=0}^{N-1} b^{(t)}_{-j}\|_{S^{m'-N}_{loc},M'}
\lesssim_{N,M,\cL} \| (\eta-1)(t\ \cdot )\|_{\cM^{m' m_0}, M''}
\lesssim
 t^{m'm_0}.
 $$
 We fix $m'>0$ small enough so that 
$N>\Re m +m_0 m' +n$.
A modification of  the  proof of Corollary \ref{cor_thm_main} Part (1) gives that
$$
\tr( A \eta(t\cL) )
=
\sum_{j+|\alpha|<N}
\frac {(2i\pi)^{-|\alpha|}}{\alpha !}
\tr \left(\Op(\partial_\xi^\alpha a \ \partial_x^\alpha b^{(t)}_{-j})\right)
+\cO(t^{m_0 m'}),
$$
and  $\tr \left(\Op(\partial_\xi^\alpha a \ \partial_x^\alpha b^{(t)}_{-j})\right)$ is a linear combination of 
$t^{\frac {m-|\alpha| -j +n}{-m_0}}$ over $0\leq k\leq j$, $0\leq p \leq |\alpha|$
modulo an error term $\cO(t^j)$.
Let us study more closely the case of $j=0$.
For $\alpha\not=0$, we have:
$$
\tr \left(\Op(\partial_\xi^\alpha a \ \partial_x^\alpha b^{(t)}_{0})\right)
=
\int_{\Omega\times \bR^{n}}
\partial_\xi^\alpha a \ \partial_x^\alpha \eta(t \ell_{m_0})\
dx d\xi
=\sum_{p=1}^{|\alpha|}t^p I_p(a)
$$
where
\begin{align*}
	I_p(a) &:= 
	\int_{\Omega\times \bR^{n}}
(\partial_\xi^\alpha a) \,  f_{pm_0} \,  \eta^{(p)}(t \ell_{m_0})\
dx d\xi
\\
&=\int_{\Omega\times \bS^{n-1}}
\int_{u=0}^{+\infty}
\left((\partial_\xi^\alpha a) \,  f_{pm_0} \right)\left( x,\left(\frac{u}{t\ell_{m_0}}\right)^{\frac 1{m_0}} \xi\right) \,  \eta^{(p)}(u) \left(\frac{u}{t\ell_{m_0}}\right)^{\frac n{m_0}} 
\frac{du}{m_0 u}\
dx d\varsigma(\xi).
\end{align*}
We compute easily that $I_p(a_m)$ is equal to $t^{-\frac{m-|\alpha|+pm_0+n}{m_0}}$ up to a finite constant and that
$$
	|I_p(a-a_m) |
\lesssim 
\int_{\Omega'\times \bS^{n-1}}
\int_{u=0}^{2^{m_0}t\ell_{m_0}}
\left(\frac{u}{t\ell_{m_0}}\right)^{\frac {m_1 -|\alpha|+pm_0 +n}{m_0}} 
|\eta^{(p)}(u)| \frac{du}{u} \ dx d\varsigma(\xi)
$$
is identically 0 for $t\in (0,t_0)$ with $t_0$ small enough.
For $\alpha=0$, we see
$$
\tr \left(\Op(a \  b^{(t)}_{0})\right)
=
\int_{\Omega\times \bR^{n}}
 a \ \eta(t \ell_{m_0})\
dx d\xi,
$$
and that $\int_{\Omega\times \bR^{n}}
 a_m \ \eta(t \ell_{m_0})\
dx d\xi $ is equal to $t^{-\frac{m+n}{m_0}}$ up to a finite constant.
We are therefore led to analyse the error term
$$
E_t:=\int_{\Omega\times \bR^n}
(a -a_m) \ \eta(t\ell_{m_0}) \
dx d\xi .
$$
Recall that $a=a_m\psi$.
For each $x\in \Omega$ and $t\in (0,1)$, 
the Fourier inversion formula for tempered distribution implies
\begin{equation}
\label{eq_pf_thm_TROmega_inty_chi}
\int_{\bR^n}
(a_m \psi -a_m)(x,\xi)\ \eta(t\ell_{m_0})(x,\xi)\
d\xi
=
 \int_{\bR^n}(\kappa_{a,x} - \kappa_{a_m,x})(y) \
 f_{t,x}(y) \ dy \ 
\end{equation}
having used the notation of Section \ref{subsubsec_def_TR}
and set
$$
f_{t,x}(y):= \cF^{-1} \left\{\eta(t\ell_{m_0}(x,\cdot))\right\}(-y).
$$
The last integral in \eqref{eq_pf_thm_TROmega_inty_chi} tends to 
$(\kappa_{A,x} - \kappa_{a_m,x})(0)=\TR_x(A)$ as $t\to 0$
since   the function $\kappa_{a,x} - \kappa_{a_m,x}$  is continuous and bounded on $\bR^n$ and $(f_{t,x})$ is a Schwartz approximation of the identity in the sense that  
$$
f_{t,x} (y)= t^{-\frac n {m_0}} f_{1,x}(t^{-\frac 1{m_0}} y),
\quad
f_{1,x}\in \cS(\bR^n),
\quad
\int_{\bR^n} f_{1,x}(y) dy = \eta(\ell_{m_0}(x,0)) =\eta(0)=1.
$$
One checks easily that each $\cS(\bR^n)$-semi-norm of $f_{1,x}$ 
and the supremum norm of $\kappa_{a,x} - \kappa_{a_m,x}$ 
are uniformly bounded with respect to $x\in \bar \Omega'$. 
Therefore, the convergence of \eqref{eq_pf_thm_TROmega_inty_chi} to 0 is uniform with respect to $x$ as $t\to 0$ and we have 
$$
E_t =  \int_{\Omega'} \int_{\bR^n}(\kappa_{a,x} - \kappa_{a_m,x})(y) \ f_{t,x}(y) \ dy \ dx 
 \longrightarrow_{t\to 0} \int_{\Omega'} \TR_x(A) \ dx = \TR(A).
 $$
Theorem \ref{thm_TR} in the case of $\eta\equiv 1$ near 0 follows.

\subsubsection{Case of $m+km_0\not\in \bZ$ for all $k\in \bN$}
We fix  $\eta_0\in C_c^\infty(\bR)$ valued in $[0,1]$ and such that $\eta_0\equiv 1$ on $\supp (\eta)$.
We denote by $N'$ the integer of Corollary \ref{cor_thm_main_taylor0} Part (2) for $N$.
We set $\eta_1:=\eta - p\eta_0$ where $p$ is the Taylor expansion of $\eta$ of order $N$, that is,
$$
p(\lambda)=\sum_{k=0}^{N'} \frac{ \eta^{k}(0)}{k!} \lambda^k.
$$
Hence we can write 
$$
\eta = \eta_1 + p \eta_0,
$$
with $\eta_1\in C_c^\infty(\bR)$ satisfying the hypotheses of Corollary \ref{cor_thm_main_taylor0} Part (2) for $N$.
We see
$$
\tr (A \eta(t\cL)) 
=
\tr (A \eta_1(t\cL))
+
\sum_{k=0}^{N'} \frac{ \eta^{k}(0)}{k!}t^k 
\tr (A\cL^k \eta_0(t\cL)).
$$
The hypotheses on $m$ allows us to apply the case already proved above for every term in the sum over $k$.
Theorem \ref{thm_TR} in this case follows by linearity of $\TR$.

\appendix

\section{Proof of Theorem \ref{thm_f(L0)}}
\label{sec_pf_thm_f(L0)}

This section is devoted to the proof of Theorem \ref{thm_f(L0)}.
We consider  Setting \ref{set_cL_Rn}.
We start with the construction of a parametrix for $\cL-z$, 
thereby obtaining a pseudo-differential expansion for the resolvent of $\cL$ with a precise behaviour in $z$.

\subsection{Construction of a parametrix $P_z$}
\label{subsec_parametrix}

We fix a cut-off function $\psi$ for the low frequencies, that is, 
$\psi\in C^\infty(\bR^n)$ with $\psi(\xi)=0$ for $|\xi|\leq 1/2$ and $\psi(\xi)=1$ for $|\xi|\geq 1$.

\begin{lemma}
\label{lem_est_l-z-1}
The symbol $(\ell_{m_0}-z)^{-1}\psi$ given by 
$(x,\xi)\mapsto (\ell_{m_0}(x,\xi)-z)^{-1}\psi(\xi)$
is in $S^{-m_0}_{loc}(\Omega\times \bR^n)$.
Furthermore, 
for any $N\in \bN$,
there exists $C\geq 0$ such that 
$$
\forall z\in \bC \setminus \bR
\ \mbox{with}\ |\Im z|\leq 2 \qquad
\|(\ell_{m_0}-z)^{-1}\psi\|_{S^{-m_0}_{loc},N}
\leq C \left(\frac{\langle z\rangle }{|\Im z|}\right)^{N+1}.
$$
\end{lemma}

\begin{proof}
Observing that 
$$
\forall z\in \bC\backslash \bR,
\quad\forall (x,\xi)\in K\times (\bR^n\backslash\{0\})
\qquad
|(\ell_{m_0}(x,\xi)-z)^{-1}| \leq \frac1{c_K |\xi|^{m_0}+1}
 \frac {\langle z\rangle }{|\Im z|},
$$
the statement follows from routine computations.
\end{proof}

We use the notation and result of Proposition \ref{prop_OpPhi}.
For each $z\in \bC \setminus\bR$, we set 
$$
R_{z}:= (\cL -z) \ \Op_\Phi\left( (\ell_{m_0}-z)^{-1} \psi \right) \ -  \ \id.
$$
This defines an operator $R_{z}\in \Psi^0_{ps}(\Omega)$. Let us show that it is in fact of order -1 and estimate the $z$-dependence of its semi-norms.
We write 
$$
R_{z} = \tilde R_{z} + \check R_{z} - \id + \Op_\Phi(\psi^2),
$$
where 
\begin{align*}
 \tilde R_{z}
 &:=\Op_\Phi\left( (\ell_{m_0}-z) \psi \right) \Op_\Phi\left( (\ell_{m_0}-z)^{-1} \psi \right) - \Op_\Phi(\psi^2),\\
 \check R_{z}
 &:= 
 \left(\cL - \Op_\Phi\left(  \ell_{m_0} \psi \right)\right) 
 \Op_\Phi\left( (\ell_{m_0}-z)^{-1} \psi \right).
\end{align*}
Clearly, $\id - \Op_\Phi(\psi^2)\in \Psi_{ps}^{-\infty}(\Omega)$.
Let us analyse $\check R_{z}$.
The operator $\cL - \Op_\Phi\left(  \ell_{m_0} \psi \right)$ is  in $\Psi^{m_0}_{ps}(\Omega)$
as the difference of two operators in $\Psi^{m_0}_{ps}(\Omega)$. However, modulo $S_{loc}^{-\infty}(\Omega\times\bR^n)$, its symbol is $\ell - \ell_{m_0}\psi$ which is in $S^{m_0-1}_{loc}(\Omega\times\bR^n)$.
Hence $\cL - \Op_\Phi\left(  \ell_{m_0} \psi \right)\in \Psi^{m_0-1}_{ps}(\Omega)$.
By Lemma \ref{lem_est_l-z-1}, $\Op_\Phi\left( (\ell_{m_0}-z)^{-1} \psi \right) \in \Psi^{-m_0}_{ps}(\Omega)$
so $S^{-1,z} \in \Psi^{-1}_{ps}(\Omega)$ with for any $N\in \bN$
$$
 \| \check R_{z}\|_{\Psi^{-1},N}
 \lesssim 
 \|\cL - \Op_\Phi\left(  \ell_{m_0} \psi \right)\|_{\Psi^{m_0-1},N_1}
 \|(\ell_{m_0}-z)^{-1} \psi \|_{S^{-m_0}_{loc},N_2}
 \lesssim \left(\frac{\langle z\rangle }{|\Im z|}\right)^{N_2+1},
$$
for some $N_1,N_2$ independent of $z\in \bC\setminus \bR$.
For $\tilde R_{z}$, by Proposition \ref{prop_OpPhi} and Lemma \ref{lem_est_l-z-1}, 
$$
 \| \tilde R_{z}\|_{\Psi^{-1},N}
 \lesssim 
 \|\Op_\Phi\left( (\ell_{m_0}-z) \psi \right) \|_{\Psi^{m_0},N'_1}
\|\Op_\Phi\left( (\ell_{m_0}-z)^{-1} \psi \right) 
 \|_{\Psi^{-m_0},N'_2}
 \lesssim \langle z\rangle \left(\frac{\langle z\rangle }{|\Im z|}\right)^{N'_2+1}.
$$
We have obtained that for any $N\in \bN$, there exist  $C=C_{(N)}\geq 0$ and $N'=N'_{(N)}\in \bN$ depending on $N$ (as well as Setting \ref{set_cL_Rn}, $\psi$ and $\Phi$) but not on $z\in \bC\setminus\bR$ such that 
$$
\|R_{z}\|_{\Psi^{-1},N}\leq C \Big(1+\frac{\langle z\rangle^{N'+1} }{|\Im z|^{N'}}\Big).
$$

For $t>0$ we set $\psi_t(\xi)=\psi(t\xi)$.
We modify the argument of Lemma \ref{lem_expansion}.
For any $j,k,N\in \bN_0$, we have with some constants $D_{j,k,N}$
$$
\forall t\in (0,1)\qquad
\forall a\in S^{-j}_{loc}(\Omega\times \bR^n)\qquad
\|a \psi_t\|_{S^{-j+k}_{loc},N}\leq D_{j,k,N} t^k\|a\|_{S^{-j}_{loc},N},
$$
while the property of the pseudo-differential calculus implies that for any $j,N\in \bN$ there exist $C_{j,N}>0$  and $N_{j,N}\in \bN$ such that 
$$
\forall A\in \Psi^{-1}_{ps}(\Omega)
\qquad
\|A^j\|_{\Psi^{-j},N} \leq C_{j,N} (\|A\|_{\Psi^{-1},N_{j,N}} )^j;
$$
If $A=\Op(a)$ and $j\in \bN$ then we denote by $a^{\# j}$  the symbol of $A^j = \Op(a^{\# j})$ with the convention that $a^{\# 0}=1$ and $A^0=\id$.
We denote by $r_{z}$ the symbol of $R_{z}=\Op(r_{z})$.
The previous estimates yield
\begin{equation}
\label{eq_est_rzj}
\|r_{z}^{\# j} \psi_t\|_{S^{-j+k}_{loc},N}
\leq D_{j,k,N} C_{j,N}  
\left(C_{(N_{j,N})} \Big(1+\frac{\langle z\rangle^{N_{(N_{j,N})}'+1} }{|\Im z|^{N_{(N_{j,N})}'}}\Big)
\right)^j \ t^k.	
\end{equation}
Choosing a decreasing sequence $(t_j)_{j\in \bN}\subset (0,1)$ 
such that 
$$
D_{j,k,N} C_{j,N}  
\left(C_{(N_j)} \Big(1+j^{2N_{(N_{j,N})}'+1}\Big)\right)^j \ t_j^k
\leq 2^{-j}
\quad\mbox{with} \quad j=k=N, 
$$
the series $\sum_{j=0}^{+\infty}r_{z}^{\# j} \psi_{t_j}$ is absolutely convergent in $S^0_{loc}(\Omega\times \bR^n)$ and for all $j_0\in \bN$ we have 
$$
\langle z\rangle \leq j_0 \ \mbox{and}\ 
|\Im z| > 1/j_0\ \Longrightarrow \ 
 \sum_{j>j_0}\|r_{z}^{\# j} \psi_{t_j}\|_{S^{-j_0}_{loc},j_0} \leq 2^{-j_0}.
 $$
We now define 
$$
P_z := 
\Op_\Phi\left( (\ell_{m_0}-z)^{-1} \psi \right) 
\Op_\Phi\left( \sum_{j=0}^{+\infty}r_{z}^{\# j} \psi_{t_j} \right) .
$$
By construction,
for each $z\in \bC\setminus\bR$, 
$$
P_z\in \Psi_{ps}^{-m_0}
\quad\mbox{and}\quad
(\cL -z) P_z -\id \in \Psi^{-\infty}_{ps}(\Omega).
$$
Moreover,  the mapping $z\mapsto P_z$ is continuous, even holomorphic, from $\bC \setminus \bR$ to  $\Psi_{ps}^{-m_0}$.

The following technical lemma summarises the properties of $P_z$ we will need:
 \begin{lemma}
\label{lem_Pz}
\begin{enumerate}
\item 
For each $z\in \bC\setminus\bR$,
the symbol $p_z\in S^{-m_0}_{loc}(\Omega\times \bR^n)$ of $P_z$ admits an expansion 
$$
p_z \sim \sum_{j\geq 0} p_{z,-m_0-j}
$$
where each $p_{z,-m_0-j}\in S^{-m_0-j}_{loc}(\Omega\times \bR^n)$ is of the form 
$$
p_{z,-m_0-j} = 
\sum_{k=0}^{2j} \frac{d_{j,k}}{(\ell_{m_0}-z)^{1+k}},
$$
with each $d_{j,k}\in S^{-j+k m_0}_{loc} (\Omega\times \bR^n)$ being a symbol independent of $z$, 
identically 0 for $|\xi| <1$
and
homogeneous of degree $-j+ km_0$ in $\xi$ for  $|\xi|\geq 2$; 
when $j=0$, $d_{0,0}$ is identically equal to 1 for $|\xi|\geq 2$. 

\item Let $f\in \cM^m(\bR)$ with $m<-1$.
We construct an almost analytic extension $\tilde f$ as in Lemma \ref{lem_tildefcMm}. 
We can modify the sequence $(t_j)$ chosen above in such a way that for every $N\in \bN$,
 the integrals
\begin{align*}
I^{(1)}_N
&:=\int_{\bC}
\frac { |\bar \partial \tilde f (z)| }
 {|\Im z|} \ 
\|P_z\|_{\Psi^{-m_0},N}
\ L(dz), 
\\
I^{(2)}_N
&:=	
\int_{\bC}
\frac { |\bar \partial \tilde f (z)| }
 {|\Im z|} \ 
 \|P_z - \sum_{j<N}\Op_\Phi (p_{z,-m_0-j})\|_{\Psi^{-m_0-N},N}
\ L(dz),
\\
I^{(3)}_N
&:=\int_{\bC}
\frac { |\bar \partial \tilde f (z)| }
 {|\Im z|} \ 
\|\id  - (\cL-z) P_z\|_{\Psi^{-N},N}
\ L(dz),
\end{align*}
are finite.
\end{enumerate}
\end{lemma}

\begin{proof}
By Proposition \ref{prop_OpPhi},
the symbol $r_{z}$ admits the expansion
$r_{z}\sim \sum_{j> 0} r_{z,-j}$
where
$$
r_{z,-j} =
\sum_{|\alpha|+j'  = j }
	\frac {(2i\pi)^{-|\alpha|}}{\alpha !}
	\partial^\alpha_\xi  \ell_{m_0-j'}
\partial^\alpha_x	 (\ell_{m_0}-z)^{-1} \psi ,
$$
so the symbol $b_z$ of $B_z=\Op_\Phi\left( \sum_{j=0}^{+\infty}r_{z}^{\# j} \psi_{t_j} \right)$
admits an expansion of the form $b\sim\sum_{j\geq 0} b_{z,-j}$ with $b_{z,0}=1$ and for any $j\in \bN$
$$
b_{z,-j} = 
\sum_{k=0}^j \frac{\tilde d_{j,k}}{(\ell_{m_0}-z)^{1+k}},
$$
where each $\tilde d_{j,k}\in S^{-j+(1+k)m_0}_{loc} (\Omega\times \bR^n)$ is homogeneous of degree $-j+(1+k)m_0$ in $\xi$ for large $|\xi|$ and identically 0 for small $|\xi|$.
Hence, 
	the symbol $p_z$ admits the expansion $\sum_{j\geq 0} p_{z,-m_0-j}$ with 
	$$
	p_{z,-m_0-j} = \sum_{|\alpha| +j' = j }
	\frac {(2i\pi)^{-|\alpha|}}{\alpha !}
	\partial^\alpha_\xi (\ell_{m_0}-z)^{-1}  
\partial^\alpha_x b_{z,-j'},
	$$
and it is therefore as described in the statement.
 Part (1) is proved.

Let us prove Part (2).
We start with the following consequence of
Lemmata \ref{lem_est_l-z-1} and \ref{lem_tildefcMm} and estimates  \eqref{eq_est_rzj}  
$$
\int_{\bC}
\frac { |\bar \partial \tilde f (z)| }
 {|\Im z|} \ 
 \frac{\langle z\rangle^{M_1} }{|\Im z|^{M_1}} \
 \|r_{z}^{\# j} \psi_t\|_{S^{-j+k}_{loc},M_2}
\ L(dz), 
\leq C'_{j,k,M_1,M_2}  
 \|f\|_{\cM^m, 2jN_{(N_{j,M_2})}'+j+2M_1+5}
  t^k,
$$
 for any $j,k,M_1,M_2$ and
for some constants $C'_{j,k,M_1,M_2} $.
If need be, we modify the sequence $(t_j)$ so that it also satisfies 
$$
C'_{j,k,M_1,M_2}  
 \|f\|_{\cM^m, 2jN_{(N_{j,M_2})}'+j+2M_1+5}
  t_j^k \leq 2^{-j}
  \quad\mbox{with} \quad j=k=M_1=M_2.
$$
In this way, for every  $M_1,M_2\in \bN_0$, the sum 
$$
\sum_{j\geq 0}
\int_{\bC}
\frac { |\bar \partial \tilde f (z)| }
 {|\Im z|} \ 
 \frac{\langle z\rangle^{M_1} }{|\Im z|^{M_1}} \
 \|r_{z}^{\# j} \psi_{t_j}\|_{S^{0}_{loc},M_2}
\ L(dz)
$$
is finite.

Let us consider the integrals $I^{(1)}_M$ for $M\in \bN$.
We have 
$$
\|P_z\|_{\Psi^{-m_0},M} 
\lesssim 
\|(\ell_{m_0}-z)^{-1} \psi \|_{\Psi^{-m_0},M_1}
\sum_{j\geq 0}\| r_{z}^{\# j} \psi_{t_j}\|_{\Psi^{0},M_2},
$$
for some $M_1,M_2$.
By Lemma \ref{lem_est_l-z-1}, 
$$
\|(\ell_{m_0}-z)^{-1} \psi\|_{S_{loc}^{-m_0},M_1}
\lesssim 
\frac{\langle z\rangle^{M'_1} }{|\Im z|^{M'_1}},
$$
so 
$$
I^{(1)}_N
\lesssim 
\sum_{j\geq 0}
\frac { |\bar \partial \tilde f (z)| }
 {|\Im z|} \ 
 \frac{\langle z\rangle^{M'_1} }{|\Im z|^{M'_1}}
 \| r_{z}^{\# j} \psi_{t_j}\|_{\Psi^{0},M_2}
\ L(dz), 
$$
 is finite.

We generalise this when considering $I^{(2)}_M$, $M\in \bN$. Indeed, we have
\begin{align*}
&\|P_z - \sum_{|\alpha|<M}\frac {(2i\pi)^{-|\alpha|}}{\alpha !}
\Op_\Phi\left(\partial^\alpha_\xi (\ell_{m_0}-z)^{-1} \psi
\partial^\alpha_x
(\sum_{j=0}^{+\infty}r_{z}^{\# j} \psi_{t_j})\right)
\|_{\Psi^{-m_0-M},M} 
\\&\qquad\lesssim 
\|(\ell_{m_0}-z)^{-1} \psi \|_{\Psi^{-m_0},M_1}
\sum_{j\geq 0}\| r_{z}^{\# j} \psi_{t_j}\|_{\Psi^{0},M_2},
\end{align*}
Proceeding as above, the right hand side is integrable 
against ${ |\bar \partial \tilde f (z)| }/
 {|\Im z|} $.
Note that 
 $$
 \|\Op_\Phi\left(\partial^\alpha_\xi (\ell_{m_0}-z)^{-1} \psi
\partial^\alpha_x
r_{z}^{\# j} \psi_{t_j})\right)\|_{\Psi^{-m_0-M},M}
\lesssim 
\|(\ell_{m_0}-z)^{-1} \psi \|_{\Psi^{-m_0},M'_1}
\|\partial^\alpha_x
r_{z}^{\# j} \psi_{t_j}\|_{\Psi^{-M},M'_2}, 
$$
for some $M'_1,M'_2$, and this is integrable 
 against ${ |\bar \partial \tilde f (z)| }/
 {|\Im z|} $.
Classical analysis shows that 
 $$
\|\sum_{|\alpha|<M}\frac {(2i\pi)^{-|\alpha|}}{\alpha !}
\Op_\Phi\left(\partial^\alpha_\xi (\ell_{m_0}-z)^{-1} \psi
\partial^\alpha_x
(\sum_{j=0}^{M'_2}r_{z}^{\# j} \psi_{t_j})\right) -
\sum_{j<M}\Op_\Phi (p_{z,-m_0-j})
\|_{\Psi^{-m_0-M},M} 
$$
is then bounded up to a constant by a finite sum of $\langle z\rangle^{M''_1}/ |\Im z|^{M''_2}$, 
which is integrable 
 against ${ |\bar \partial \tilde f (z)| }/
 {|\Im z|} $.
 Hence $I^{(2)}_M$ is finite.

Let us consider the integral $I^{(3)}_N$, $N\in \bN$.
We decompose
$\|\id  - (\cL-z) P_z\|_{\Psi^{-N},N}
\leq E_{1,z} +E_{2,z}$ with 
\begin{align*}
E_{1,z}&:=
\|\id  - (\cL-z) \Op_\Phi\left( (\ell_{m_0}-z)^{-1} \psi \right) \|_{\Psi^{-N},N}
\\
E_{2,z}&:=
\|\Op_\Phi\left( (\ell_{m_0}-z)^{-1} \psi \right) 
\Op_\Phi\Big( \sum_{j>0
}r_{z}^{\# j} \psi_{t_j} \Big) \|_{\Psi^{-N},N}	.
\end{align*}
Proceeding as above, we obtain 
$$
E_{1,z} \lesssim
 \Big(1+\frac{\langle z\rangle^{N_1+1} }{|\Im z|^{N_2}}\Big).
$$
for  integers $N_1,N_2\in \bN$, and this is integrable against ${ |\bar \partial \tilde f (z)| }/
 {|\Im z|} $. For the second expression, we see 
 $$
E_{2,z}\lesssim
\|(\ell_{m_0}-z)^{-1} \psi\|_{S_{loc}^{-m_0},N'_1}
\sum_{j>0}
\|r_{z}^{\# j} \psi_{t_j} \|_{S_{loc}^{-N +m_0},N'_2}.	
$$ 
and by Lemma \ref{lem_est_l-z-1}, 
$$
\|(\ell_{m_0}-z)^{-1} \psi\|_{S_{loc}^{-m_0},N'_1}
\lesssim 
\frac{\langle z\rangle^{N''_1} }{|\Im z|^{N''_1}}
$$
for some integer $N'_1,N'_2,N''_1$ depending on $N$.
Therefore, supposing  $N\geq |m_0|$,
$$
\int_{\bC}
\frac { |\bar \partial \tilde f (z)| }
 {|\Im z|} \ E_{2,z} \ L(dz)
 \lesssim 
\sum_{j>0} \int_{\bC}
 \frac { |\bar \partial \tilde f (z)| }
 {|\Im z|} \ 
 \frac{\langle z\rangle^{N''_1} }{|\Im z|^{N''_1}}
\
 \|r_{z}^{\# j} \psi_{t_j}\|_{S^{0}_{loc},N'_2}
\ L(dz)
$$
and our new assumption on the sequence $(t_j)$ implies that this is finite.
Therefore $I^{(3)}_N$ is finite and 
this concludes the proof.
\end{proof}

\subsection{Proof of Theorem \ref{thm_f(L0)}}
Let $f\in \cM^m(\bR)$ with $m<-1$.
We construct the operator $P_z$ as in Section \ref{subsec_parametrix}, and in particular as in Lemma \ref{lem_Pz} Part (2).
This allows us to consider the expansion of 
its symbol 
$p_z \sim \sum_{j\geq 0} p_{z,-m_0-j}$
 described in Lemma \ref{lem_Pz} and to define the operator 
$$
A:=\frac 1{\pi}
\int_{\bC} \bar \partial \tilde f (z) \
 P_z \ L(dz)	 \ \in \Psi^{- m_0}_{ps}(\Omega),
$$

For every $j,N\in \bN_0$, the form of the symbols $p_{z,-m_0-j}$  allows for the estimate
$$
\|p_{z,-m_0-j}\|_{S^{-m_0-j}_{loc},N}
\lesssim 
\sum_{k=0}^{2j} 
\|d_{j,k}\|_{S^{-j+km_0}_{loc},N} 
\|(\ell_{m_0}-z)^{-1}\psi\|_{S^{-m_0}_{loc},N} ^{1+k}
\lesssim 
\left(\frac{\langle z\rangle }{|\Im z|}\right)^{(N+1)}
+\left(\frac{\langle z\rangle }{|\Im z|}\right)^{(N+1) (2j+1)},
$$
so the integral 
$$
\frac 1{\pi}
\int_{\bC} |\bar \partial \tilde f (z)| \
 \|p_{z,-m_0-j}\|_{S^{-m_0-j}_{loc},N} \ L(dz)
 $$
 is finite.
 Hence, we can define the symbols $a_{mm_0-j}\in S^{-m_0-j}_{loc}(\Omega\times \bR^n)$ for $j=0,1,\ldots$ via
$$
a_{mm_0-j}
:=\frac 1{\pi}
\int_{\bC} \bar \partial \tilde f (z) \
 p_{z,-m_0-j} \ L(dz)	.
$$
Using  Lemma \ref{lem_cauchyF},
 we can compute further 
$$
a_{mm_0-j}
=
\sum_{k=0}^{2j} d_{j,k} \ 
\frac {1}{\pi}
\int_{\bC} \bar \partial \tilde f (z) \
 \frac {L(dz)}{(\ell_{m_0}-z)^{1+k}}
=
\sum_{k=0}^{2j} d_{j,k} \ \frac{(-1)^k}{k!} f^{(k)}(\ell_{m_0}).
$$
This shows that $a_{mm_0-j}$ is in fact in $S^{mm_0-j}_{loc}(\Omega\times \bR^n)$
and also of the form described in Part (1) of Theorem \ref{thm_f(L0)}.
The finiteness of the integral $I^{(2)}_N$ for each $N\in \bN$ in 
Lemma \ref{lem_Pz} Part (2) implies that 
$A-\sum_{j<N}\Op_\Phi (a_{mm_0-j})$ is in $\Psi^{m_0-N}(\Omega)$ for every $N\in \bN$.
Hence $A\in \Psi_{ps}^{mm_0}(\Omega)$, and its symbol $a$  admits the expansions $a\sim \sum_{j\geq 0} a_{mm_0-j}$.

For each $z\in \bC\setminus\bR$, 
let $K_z\in C^\infty(\Omega\times \Omega)$ denote the  integral kernel of $(\cL -z) P_z -\id \in \Psi_{ps}^{-\infty}$. 
For each $y\in \Omega$, $x\mapsto K_z(x,y)$ is smooth and compactly supported in $\Omega$. Lemma \ref{lem_cLOmega'} gives 
$$
\|(\cL-z)^{-1}_x K_z(x,y)\|_{H^{N_1}_{loc}(\Omega_x),N_1}\lesssim 
\frac 1{|\Im z|}
\|K_z(x,y)\|_{H^{N'_1}_{loc}(\Omega_x),N'_1}.
$$
A similar property  holds also for $x\mapsto \partial^\alpha_y (\cL-z)^{-1}_x K_z(x,y) = (\cL-z)^{-1}_x \partial^\alpha_y K_z(x,y)$ for any $\alpha\in \bN_0^n$ and for $y$ in a given compact of $\Omega$. This implies the estimate 
$$
\|(\cL-z)^{-1}_x K_z(x,y)\|_{H^{N_2}_{loc}(\Omega_x\times \Omega_y),N_2}\lesssim 
\frac 1{|\Im z|}
\|K_z(x,y)\|_{H^{N_2'}_{loc}(\Omega_x\times \Omega_y),N_2'},
$$
for every $N_2\in \bN$ and for some $N'_2$ depending on $N_2$.
Since $(\cL-z)^{-1}_x K_z(x,y)$ is the integral kernel of the operator $P_z - (\cL-z)^{-1}$, 
by Lemma \ref{lem_smoothing}, the operator
$P_z - (\cL-z)^{-1}$ is smoothing and properly supported. Furthermore,
it satisfies for every $N\in \bN$
\begin{align*}
\|P_z - (\cL-z)^{-1}\|_{\Psi^{-N},N}
&\lesssim 
\|(\cL-z)^{-1}_x K_z(x,y)\|_{H^{N_1}_{loc}(\Omega_x\times \Omega_y),N_1}
\lesssim 
\frac 1{|\Im z|}
\|K_z(x,y)\|_{H^{N_2}_{loc}(\Omega_x\times \Omega_y),N_2}
\\&\lesssim 
\frac 1{|\Im z|}
\|\id - (\cL-z)Q_z\|_{\Psi^{-N'},N'}	\ ,
\end{align*}
for some $N_1,N_2,N'$.
All the implicit constants above are independent of $z$, 
so by Lemma \ref{lem_Pz}, the integral
$$
\int_\bC |\bar \partial \tilde f(z)|
\left\|(\cL-z)^{-1}-P_z\right\|_{\Psi^{-N},N} L(dz)
\lesssim 
\int_\bC \frac{|\bar \partial \tilde f(z)|}
{|\Im z|}
\left\|\id - (\cL-z)P_z\right\|_{\Psi^{-N'},N'}
L(dz),
$$
is finite.
This implies that the operator
$$
f(\cL)-A = \frac 1\pi \int_\bC \bar \partial \tilde f(z)
\left((\cL-z)^{-1}-P_z\right) L(dz),
$$
is smoothing.
This concludes the proof of Theorem \ref{thm_f(L0)} Part (1).
The rest of the statement follows.

\end{document}